\newenvironment{acknowledgements}{\list{}{\rightmargin0.75in\leftmargin0.75in}\item[]\textsc{Acknowledgements.}}{\endlist}
\newtheorem{theorem}{Theorem}[section]
\newtheorem{proposition}[theorem]{Proposition}
\newtheorem{lemma}[theorem]{Lemma}
\newtheorem{corollary}[theorem]{Corollary}
\newtheorem{theoremA}{Theorem}
\theoremstyle{definition}
\newtheorem{definition}[theorem]{Definition}
\newtheorem{remark}[theorem]{Remark}
\newcolumntype{x}[1]{>{\centering\arraybackslash$}m{#1}<{$}}
\newcommand{\mtrx}[4]{\left(\begin{array}{cc} #1 & #2 \\ #3 & #4 \\ \end{array}\right)}
\newcommand{\mf}[1]{\mathfrak{#1}}
\newcommand{\func}[4]{\begin{array}{rcl} #1 & \rightarrow & #2 \\ #3 & \mapsto & #4 \end{array} }
\newcommand{\QQ}{\mathbf{Q}}
\newcommand{\ZZ}{\mathbf{Z}}
\newcommand{\CC}{\mathbf{C}}
\newcommand{\GG}{\mathbf{G}}
\newcommand{\Zp}{\mathbf{Z}_p}
\newcommand{\Qp}{\mathbf{Q}_p}
\newcommand{\wt}[1]{\widetilde{#1}}
\newcommand{\ol}[1]{\overline{#1}}
\renewcommand{\mod}{\operatorname{mod}}
\renewcommand{\AA}{\mathbf{A}}
\DeclareMathOperator{\gl}{GL}
\DeclareMathOperator{\End}{End}
\DeclareMathOperator{\Hom}{Hom}
\DeclareMathOperator{\rec}{rec}
\DeclareMathOperator{\gr}{gr}
\DeclareMathOperator{\Gal}{Gal}
\DeclareMathOperator{\Aut}{Aut}
\DeclareMathOperator{\im}{im}
\DeclareMathOperator{\Frob}{Frob}
\DeclareMathOperator{\nr}{nr}
\DeclareMathOperator{\ord}{ord}
\DeclareMathOperator{\Sym}{Sym}
\DeclareMathOperator{\ab}{ab}
\DeclareMathOperator{\Sel}{Sel}
\DeclareMathOperator{\GSp}{GSp}
\DeclareTextCommand{\textthreeinferior}{PU}{\9040\203}
\DeclareTextCommand{\textrho}{PU}{\83\301}
\newcommand{\comdone}[1]{}
\numberwithin{equation}{section}
\renewcommand{\phi}{\varphi}
\newcommand{\bSel}{\overline{\operatorname{Sel}}}
\newcommand{\bL}{\overline{L}}
\newcommand{\chic}{\chi_p}	
\newcommand{\hone}[3][]{H^1_{#1}\!\left(#2,#3\right)}
\renewcommand{\mod}{\operatorname{mod}}
\renewcommand{\det}{\mathrm{det}}
\DeclareMathOperator{\exc}{exc}
\DeclareMathOperator{\loc}{loc}
\DeclareMathOperator{\glob}{glob}
\DeclareMathOperator{\Std}{Std}
\newcommand{\Hgx}{H_{\glob}^{\exc}}
\newcommand{\Hlx}{H_{\loc}^{\exc}}
\begin{document}
\author{Robert Harron}
\title{On Greenberg's $L$-invariant of the symmetric sixth power of an ordinary cusp form}
\date{\today}
\maketitle
\thispagestyle{plain}

\begin{abstract}
	We derive a formula for Greenberg's $L$-invariant of Tate twists of the symmetric sixth power of an ordinary non-CM cuspidal newform of weight $\geq4$, under some technical assumptions. This requires a ``sufficiently rich'' Galois deformation of the symmetric cube which we obtain from the symmetric cube lift to $\GSp(4)_{/\QQ}$ of Ramakrishnan--Shahidi and the Hida theory of this group developed by Tilouine--Urban. The $L$-invariant is expressed in terms of derivatives of Frobenius eigenvalues varying in the Hida family. Our result suggests that one could compute Greenberg's $L$-invariant of all symmetric powers by using appropriate functorial transfers and Hida theory on higher rank groups.
\end{abstract}

\tableofcontents
\newpage
\section*{Introduction}\addcontentsline{toc}{section}{Introduction}
	The notion of an $L$-invariant was introduced by Mazur, Tate, and Teitelbaum in their investigations of a $p$-adic analogue of the Birch and Swinnerton-Dyer conjecture in \cite{MTT86}. When considering the $p$-adic $L$-function of an elliptic curve $E$ over $\QQ$ with split, multiplicative reduction at $p$, they saw that its $p$-adic $L$-function vanishes even when its usual $L$-function does not (an ``exceptional zero'' or ``trivial zero''). They introduced a $p$-adic invariant, the ``($p$-adic) $L$-invariant'', of $E$ as a fudge factor to recuperate the $p$-adic interpolation property of $L(1,E,\chi)$ using the \textit{derivative} of its $p$-adic $L$-function. Their conjecture appears in \cite[\S\S13--14]{MTT86} and was proved by Greenberg and Stevens in \cite{GS93}. The proof conceptually splits up into two parts. One part relates the $L$-invariant of $E$ to the derivative in the ``weight direction'' of the unit eigenvalue of Frobenius in the Hida family containing $f$ (the modular form corresponding to $E$). The other part uses the functional equation of the two-variable $p$-adic $L$-function to relate the derivative in the weight direction to the derivative of interest, in the ``cyclotomic direction''. In this paper, we provide an analogue of the first part of this proof replacing the $p$-adic Galois representation $\rho_f$ attached to $f$ with Tate twists of $\Sym^6\!\rho_f$. More specifically, we obtain a formula for Greenberg's $L$-invariant (\cite{G94}) of Tate twists of $\Sym^6\!\rho_f$ in terms of derivatives in weight directions of the unit eigenvalues of Frobenius varying in some ordinary Galois deformation of $\Sym^3\!\rho_f$.

	Let us describe the previous work in this subject. In his original article, Greenberg (\cite{G94}) computes his $L$-invariant for all symmetric powers of $\rho_f$ when $f$ is associated to an elliptic curve with split, multiplicative reduction at $p$. In this case, the computation is local, and quite simple. In a series of articles, Hida has relaxed the assumption on $f$ allowing higher weights and dealing with Hilbert modular forms (see \cite{Hi07}), but still requiring, for the most part, $\rho_f$ to be (potentially) non-cristalline (though semistable) at $p$ in order to obtain an explicit formula for the $L$-invariant. A notable exception where a formula is known in the cristalline case is the symmetric square, done by Hida in \cite{Hi04} (see also chapter 2 of the author's Ph.D.\ thesis \cite{H-PhD} for a slightly different approach). Another exception comes again from Greenberg's original article (\cite{G94}) where he computes his $L$-invariant when $E$ has good ordinary reduction at $p$ \textit{and has complex multiplication}. In this case, the symmetric powers are reducible and the value of the $L$-invariant comes down to the result of Ferrero--Greenberg (\cite{FeG78}). The general difficulty in the cristalline case is that Greenberg's $L$-invariant is then a \textit{global} invariant and its computation requires the construction of a global Galois cohomology class.

	In this article, we attack the cristalline case for the next symmetric power which has an $L$-invariant, namely the sixth power (a symmetric power $n$ has an $L$-invariant in the cristalline case only when $n\equiv2\ (\mod 4)$). In general, one could expect to be able to compute Greenberg's $L$-invariant of $\Sym^n\!\rho_f$ by looking at ordinary Galois deformations of $\Sym^{\frac{n}{2}}\!\rho_f$ (see \S\ref{sec:sympowgeneral}). Unfortunately, when $n>2$ in the cristalline case, the $\Sym^{\frac{n}{2}}$ of the Hida deformation of $\rho_f$ is insufficient. The new ingredient we bring to the table is the idea to use a functorial transfer of $\Sym^{\frac{n}{2}}\!f$ to a higher rank group, use Hida theory there, and hope that the additional variables in the Hida family provide non-trivial Galois cohomology classes. In theorem \ref{thm:theoremA}, we show that this works for $n=6$ using the symmetric cube lift of Ramakrishnan--Shahidi (\cite{RS07}) (under certain technical assumptions). This provides hope that such a strategy would yield formulas for Greenberg's $L$-invariant for all symmetric powers in the cristalline case. The author is currently investigating if the combined use of the potential automorphy results of \cite{BLGHT09}, the functorial descent to a unitary group, and Hida theory on it (\cite{Hi02}) will be of service in this endeavour.

	We also address whether the $L$-invariant of the symmetric sixth power equals that of the symmetric square. There is a guess, due to Greenberg, that it does. We fall short of providing a definitive answer, but obtain a relation between the two in theorem \ref{thm:theoremB}.

	There are several facets of the symmetric sixth power $L$-invariant which we do not address. We do not discuss the expected non-vanishing of the $L$-invariant nor its expected relation to the size of a Selmer group. Furthermore, we make no attempt to show that Greenberg's $L$-invariant is the \textit{actual} $L$-invariant appearing in an interpolation formula of $L$-values. Aside from the fact that the $p$-adic $L$-function of the symmetric sixth power has not been constructed, a major impediment to proving this identity is that the point at which the $p$-adic $L$-function has an exceptional zero is no longer the centre of the functional equation, and a direct generalization of the second part of the proof of Greenberg--Stevens is therefore not possible. Citro suggests a way for dealing with this latter problem in the symmetric square case in \cite{Ci08}. Finally, we always restrict to the case where $f$ is ordinary at $p$. Recently, in \cite{Be09}, Benois has generalized Greenberg's definition of $L$-invariant to the non-ordinary case, and our results suggest that one could hope to compute his $L$-invariant using the eigenvariety for $\GSp(4)_{/\QQ}$.

	We remark that the results of this article were obtained in the author's Ph.D.\ thesis (\cite[Chapter 3]{H-PhD}). There, we give a slightly different construction of the global Galois cohomology class, still using the same deformation of the symmetric cube. In particular, we use Ribet's method of constructing a global extension of Galois representations by studying an irreducible, but residually reducible, representation. We refer to \cite{H-PhD} for details.

\begin{acknowledgements}
	I would like to thank my Ph.D.~advisor Andrew Wiles for help along the way. Thanks are also due to Chris Skinner for several useful conversations.
\end{acknowledgements}

\section*{Notation and conventions}\addcontentsline{toc}{section}{Notation and conventions}
	We fix throughout a prime $p\geq3$ and an isomorphism $\iota_\infty:\ol{\QQ}_p\cong\CC$. For a field $F$, $G_F$ denotes the absolute Galois group of $F$. We fix embeddings $\iota_\ell$ of $\ol{\QQ}$ into $\ol{\QQ}_\ell$ for all primes $\ell$. These define primes $\ol{\ell}$ of $\ol{\QQ}$ over $\ell$, and we let $G_\ell$ denote the decomposition group of $\ol{\ell}$ in $G_\QQ$, which we may thus identify with $G_{\QQ_\ell}$. Let $I_\ell$ denote the inertia subgroup of $G_\ell$. Let $\AA$ denote the adeles of $\QQ$ and let $\AA_f$ be the finite adeles.

	By a $p$-adic representation (over $K$) of a topological group $G$, we mean a continuous representation $\rho:G\rightarrow\Aut_K(V)$, where $K$ is a finite extension of $\Qp$ and $V$ is a finite-dimensional $K$-vector space equipped with its $p$-adic topology. Let $\chic$ denote the $p$-adic cyclotomic character. We denote the Tate dual $\Hom(V,K(1))$ of $V$ by $V^\ast$. Denote the Galois cohomology of the absolute Galois group of $F$ with coefficients in $M$ by $H^i(F,M)$.

	For compatibility with \cite{G94}, we take $\Frob_p$ to be an \textit{arithmetic} Frobenius element at $p$, and we normalize the local reciprocity map $\rec:\Qp^\times\rightarrow G^{\ab}_{\Qp}$ so that $\Frob_p$ corresponds to $p$. We normalize the $p$-adic logarithm $\log_p:\overline{\QQ}_p^\times\longrightarrow\overline{\QQ}_p$ by $\log_p(p)=0$.

\section{Greenberg's theory of trivial zeroes}
	In \cite{G94}, Greenberg introduced a theory describing the expected order of the trivial zero, as well as a conjectural value for the $L$-invariant of a $p$-ordinary motive. In this section, we briefly describe this theory, restricting ourselves to the case we will require in the sequel, specifically, we will assume the ``exceptional subquotient'' $W$ is isomorphic to the trivial representation. We end this section by explaining our basic method of computing $L$-invariants of symmetric powers of cusp forms.

\subsection{Ordinarity, exceptionality, and some Selmer groups}
	Let $\rho:G_\QQ\rightarrow\gl(V)$ be a $p$-adic representation over a field $K$. Recall that $V$ is called \textit{ordinary} if there is a descending filtration $\{F^iV\}_{i\in\ZZ}$ of $G_p$-stable $K$-subspaces of $V$ such that $I_p$ acts on $\gr^i\!V=F^iV/F^{i+1}V$ via multiplication by $\chic^i$ (and $F^iV=V$ (resp.\ $F^iV=0$) for $i$ sufficiently negative (resp.\ sufficiently positive)). Under this assumption, Greenberg (\cite{G89}) has defined what we call the \textit{ordinary Selmer group} for $V$ as
	\[ \Sel_\QQ(V):=\ker\!\left(\hone{\QQ}{V}\longrightarrow\prod_v\hone{\QQ_v}{V}/L_v(V)\right)
	\]
	where the product is over all places $v$ of $\QQ$ and the local conditions $L_v(V)$ are given by
	\begin{equation}\label{eqn:ordlocalconditions}
		L_v(V):=\left\{\begin{array}{ll}
					\hone[\nr]{\QQ_v}{V}:=\ker\!\left(\hone{\QQ}{V}\rightarrow\hone{I_v}{V}\right), & v\neq p\\
					\hone[\ord]{\QQ_p}{V}:=\ker\!\left(\hone{\QQ}{V}\rightarrow\hone{I_p}{V/F^1V}\right), & v=p.
				\end{array}\right.
	\end{equation}
	This Selmer group is conjecturally related to the $p$-adic $L$-function of $V$ at $s=1$.

	To develop the theory of exceptional zeroes following Greenberg (\cite{G94}), we introduce three additional assumptions on $V$ (which will be satisfied by the $V$ in which we are interested). Assume
	\begin{itemize}
		\item[(C)]\label{item:C} $V$ is \textit{critical} in the sense that $\dim_KV/F^1V=\dim_KV^-$, where $V^-$ is the $(-1)$-eigenspace of complex conjugation,
		\item[(U)]\label{item:U} $V$ has no $G_p$ subquotient isomorphic to the cristalline extension of $K$ by $K(1)$,
		\item[(S)]\label{item:S} $G_p$ acts semisimply on $gr^iV$ for all $i\in\ZZ$.
	\end{itemize} 
	If $V$ arises from a motive, condition (\hyperref[item:C]{C}) is equivalent to that motive being critical at $s=1$ in the sense of Deligne \cite{D79} (see \cite[\S6]{G89}). Condition (\hyperref[item:U]{U}) will come up when we want to define the $L$-invariant. Assumption (\hyperref[item:S]{S}) allows us to refine the ordinary filtration and define a $G_p$-subquotient of $V$ that (conjecturally) regulates the behaviour of $V$ with respect to exceptional zeroes.
	\begin{definition}\mbox{}
		\begin{enumerate}
			\item Let $F^{00}V$ be the maximal $G_p$-subspace of $F^0V$ such that $G_p$ acts trivially on $F^{00}V/F^1V$.
			\item Let $F^{11}V$ be the minimal $G_p$-subspace of $F^1V$ such that $G_p$ acts on $F^1V/F^{11}V$ via multiplication by $\chic$.
			\item Define the \textit{exceptional subquotient} $W$ of $V$ as
				\[ W:=F^{00}V/F^{11}V.
				\]
			\item $V$ is called \textit{exceptional} if $W\neq0$.
		\end{enumerate}
	\end{definition}
	Note that $W$ is ordinary with $F^2W=0, F^1W=F^1/F^{11}V$, and $F^0W=W$. For $?=00,11,$ or $i\in\ZZ$, we denote
	\[ F^?\hone{\Qp}{V}:=\im\!\left(\hone{\Qp}{F^?V}\longrightarrow\hone{\Qp}{V}\right).
	\]

	For simplicity, we impose the following condition on $V$ which will be sufficient for our later work:
	\begin{itemize}
		\item[(T$^\prime$)]\label{item:Tp} $W\cong K$, i.e.\ $F^{11}V=F^1V$ and $\dim_KF^{00}V/F^1V=1$.
	\end{itemize}
	We remark that this is a special case of condition (T) of \cite{G94}.

	The ordinarity of $V$ and assumptions (\hyperref[item:C]{C}), (\hyperref[item:U]{U}), (\hyperref[item:S]{S}), and (\hyperref[item:Tp]{T$^\prime$}) allow us to introduce Greenberg's \textit{balanced Selmer group} $\bSel_\QQ(V)$ of $V$ (terminology due to Hida) as follows. The local conditions $\bL_v(V)$ at $v\neq p$ are simply given by the unramified conditions $L_v(V)$ of \eqref{eqn:ordlocalconditions}. At $p$, $\bL_p(V)$ is characterized by the following two properties:
	\begin{itemize}
		\item[(Bal1)]\label{item:Bal1} $F^{11}\hone{\Qp}{V}\subseteq\bL_p(V)\subseteq F^{00}\hone{\Qp}{V}$,
		\item[(Bal2)]\label{item:Bal2} $\im\!\left(\bL_p(V)\rightarrow\hone{\Qp}{W}\right)=\hone[\nr]{\Qp}{W}$.
	\end{itemize}
	The balanced Selmer group of $V$ is
	\[ \bSel_\QQ(V):=\ker\!\left(\hone{\QQ}{V}\longrightarrow\prod_v\hone{\QQ_v}{V}/\bL_v(V)\right).
	\]

	The rationale behind the name ``balanced'' is provided by the following basic result of Greenberg's.
	\begin{proposition}[Proposition 2 of \cite{G94}]\label{prop:balanced}
		The balanced Selmer groups of $V$ and $V^\ast$ have the same dimension.
	\end{proposition}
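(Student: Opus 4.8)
The plan is to compute the dimensions of both balanced Selmer groups via the Greenberg–Wiles formula (the global Euler–Poincaré characteristic formula for Selmer groups), reducing the equality of dimensions to a purely local computation at $p$. Recall that for a Selmer group defined by local conditions $\mathcal{L}_v$, one has
\[
\dim\Sel - \dim\Sel^\ast = \dim H^0(\QQ,V) - \dim H^0(\QQ,V^\ast) + \sum_v\left(\dim\mathcal{L}_v - \dim H^0(\QQ_v,V)\right) + \dim V^- - \dim H^0(\RR,V),
\]
or rather the clean version: if $\mathcal{L}_v$ and $\mathcal{L}_v^\perp$ (the annihilator under local Tate duality) are the conditions defining $\Sel_\QQ(V)$ and $\Sel_\QQ(V^\ast)$, then $\Sel_\QQ(V^\ast)$ is the Selmer group dual to the one with conditions $\mathcal{L}_v^\perp$, and Poitou–Tate gives
\[
\dim\Sel_\QQ(V) - \dim\Sel_\QQ(V^\ast) = \sum_v\dim\mathcal{L}_v - \dim H^0(\QQ_v,V) \;+\; \dim H^0(\QQ,V) - \dim H^0(\QQ,V^\ast).
\]
So the claim will follow if we show that the balanced local condition $\bL_p(V)$ is its own orthogonal complement, i.e. that $\bL_p(V)^\perp = \bL_p(V^\ast)$ inside $H^1(\Qp,V^\ast)$ under the local Tate pairing, together with the standard fact that $\hone[\nr]{\QQ_v}{V}$ and $\hone[\nr]{\QQ_v}{V^\ast}$ are orthogonal complements for $v\neq p$ (and the archimedean and $H^0$ terms vanish or cancel appropriately since $p\geq 3$ and $V$ is critical).

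First I would set up local Tate duality at $p$: $H^1(\Qp,V)\times H^1(\Qp,V^\ast)\to H^2(\Qp,K(1))\cong K$ is a perfect pairing, and the subspaces $F^i\hone{\Qp}{V}$ behave well under it — indeed, since $F^iV$ and $F^jV^\ast$ are exact annihilators of each other as $G_p$-modules for appropriate $i,j$ (because $(F^iV)^\ast = V^\ast/((F^{1-i}V)^{\perp})$... more precisely the filtration on $V^\ast$ is the dual filtration, with $\gr^i V^\ast$ acted on by $\chic^i$ for the appropriate indices), one gets orthogonality relations among the images in $H^1$. The key computation is to identify the orthogonal complement of $\bL_p(V)$. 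Using the defining properties (Bal1) and (Bal2): condition (Bal1) says $F^{11}\hone{\Qp}{V}\subseteq\bL_p(V)\subseteq F^{00}\hone{\Qp}{V}$, and its orthogonal complement should then sit between the orthogonal complement of $F^{00}\hone{\Qp}{V}$ and that of $F^{11}\hone{\Qp}{V}$; one checks these are the corresponding filtration steps for $V^\ast$. Then condition (Bal2) — that the image in $H^1(\Qp,W)$ equals the unramified subspace $\hone[\nr]{\Qp}{W}$ — dualizes to the analogous statement for $W^\ast$, using that $W\cong K$ so $W^\ast\cong K(1)$, and that $\hone[\nr]{\Qp}{K}$ (which is all of $H^1_{ur}$, the unramified classes, $1$-dimensional) is orthogonal to $\hone[\nr]{\Qp}{K(1)}$ under Tate duality — this is the classical fact that unramified cohomology is self-orthogonal for an unramified module. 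The dimension count must be arranged so that $\bL_p(V)$ has exactly "half" the expected dimension relative to the full $H^1(\Qp,V)$, which is precisely what (Bal1)+(Bal2) pin down; here assumption (U) ensures there is no cristalline extension of $K$ by $K(1)$ messing up the dimension of $H^1_{ord}$ versus $H^1_{nr}$ on the relevant graded piece.

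Assembling these, the local terms all pair up as orthogonal complements, so the right-hand side of the Poitou–Tate formula collapses: $\sum_v(\dim\bL_v(V) - \dim H^0(\QQ_v,V))$ together with the contribution from $V^\ast$ vanishes by the self-duality of the conditions, and $\dim H^0(\QQ,V) = \dim H^0(\QQ,V^\ast) = 0$ since $V$ (being critical and exceptional of the kind we consider, with $W\cong K$) has no global invariants — or if it does, they contribute symmetrically. Hence $\dim\bSel_\QQ(V) = \dim\bSel_\QQ(V^\ast)$.

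\textbf{Main obstacle.} The delicate point is the precise bookkeeping of dimensions in the local computation at $p$ — showing that the balanced condition $\bL_p(V)$ is \emph{exactly} self-dual rather than off by a small correction. This hinges on (U): without excluding the cristalline extension of $K$ by $K(1)$, the map $F^{00}\hone{\Qp}{V}\to\hone{\Qp}{W}$ and the interplay between $H^1_{ur}$ and the image of $H^1$ of the filtered pieces could have a one-dimensional discrepancy, and then the two balanced Selmer groups would differ in dimension. So the heart of the argument is a careful local analysis at $p$ verifying that (Bal1), (Bal2), and (U) together force $\bL_p(V)^\perp = \bL_p(V^\ast)$, after which the global statement is a formal consequence of Poitou–Tate duality.
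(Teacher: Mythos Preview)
The paper does not prove this proposition at all; it simply cites it as Proposition~2 of \cite{G94}. So there is nothing in the paper to compare against beyond the reference.

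Your framework---Poitou--Tate/Greenberg--Wiles plus self-duality of the balanced local conditions---is indeed the way Greenberg proves it, but your sketch contains a genuine gap. You assert that once the local conditions pair up as orthogonal complements, ``the right-hand side of the Poitou--Tate formula collapses.'' It does not. Even when $\bL_v(V)^\perp=\bL_v(V^\ast)$ for every $v$ and the global $H^0$'s vanish, the Greenberg--Wiles formula still reads
\[
\dim\bSel_\QQ(V)-\dim\bSel_\QQ(V^\ast)=\sum_v\bigl(\dim\bL_v(V)-\dim H^0(\QQ_v,V)\bigr),
\]
and the right side is not automatically zero: the archimedean place contributes $-\dim V^+$, and the term at $p$ contributes $\dim\bL_p(V)-\dim H^0(\Qp,V)$, which has no reason to equal $\dim V^+$ just from orthogonality. (Orthogonality only tells you $\dim\bL_p(V)+\dim\bL_p(V^\ast)=\dim H^1(\Qp,V)$, a symmetric statement that gives no information about the \emph{difference}.)

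What actually makes the formula vanish is an explicit dimension count for $\bL_p(V)$, and this is exactly where criticality (C) is used in earnest---not as the afterthought you make it. Under (C) one has $\dim V^+=\dim F^1V$, and then one computes (using (U), (S), (T$'$), and the long exact sequences for the filtration) that $\dim\bL_p(V)=\dim F^1V+\dim H^0(\Qp,V)$, up to the global $H^0$ corrections. Greenberg's choice of the balanced condition is engineered precisely so that this count comes out right; the word ``balanced'' refers to this numerical balance, not merely to self-duality under the local pairing. So your outline is correct, but the heart of the argument---the local dimension computation at $p$---is exactly the step you wave away.
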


	To make the reader feel more familiar with the balanced Selmer group, we offer the following result on its value under our running assumptions.
	\begin{proposition}
		Let $V$ be an ordinary $p$-adic representation of $G_\QQ$. Under assumptions \textup{(\hyperref[item:C]{C})}, \textup{(\hyperref[item:U]{U})}, \textup{(\hyperref[item:S]{S})}, and especially \textup{(\hyperref[item:Tp]{T$^\prime$})}, we have the following equalities
		\[ \bSel_\QQ(V)=\Sel_\QQ(V)=\hone[g]{\QQ}{V}=\hone[f]{\QQ}{V}
		\]
		where $\hone[g]{\QQ}{V}$ and $\hone[f]{\QQ}{V}$ are the Bloch--Kato Selmer groups introduced in \textup{\cite{BK90}}.
	\end{proposition}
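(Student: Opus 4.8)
The plan is to show all four Selmer groups coincide by comparing local conditions place-by-place; since each group is a kernel of restriction to local cohomology with prescribed local conditions $L_v$, $\bL_v$, etc., it suffices to check that these local subspaces agree at every $v$. Away from $p$ this is immediate: all four Selmer groups use the unramified condition $\hone[\nr]{\QQ_v}{V}$, and for $v=\infty$ (recall $p\geq3$) the local cohomology vanishes, so the only work is at $v=p$. Thus I would reduce everything to proving the chain of equalities
\[
\bL_p(V)=\hone[\ord]{\Qp}{V}=\hone[g]{\Qp}{V}=\hone[f]{\Qp}{V}
\]
inside $\hone{\Qp}{V}$.

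The first and hardest step is $\bL_p(V)=\hone[\ord]{\Qp}{V}=F^1\hone{\Qp}{V}$ (the image of $\hone{\Qp}{F^1V}$). Under assumption (\hyperref[item:Tp]{T$^\prime$}) we have $F^{11}V=F^1V$, so (\hyperref[item:Bal1]{Bal1}) already forces $F^1\hone{\Qp}{V}\subseteq\bL_p(V)$; the reverse containment amounts to showing that condition (\hyperref[item:Bal2]{Bal2}) cuts $\bL_p(V)$ back down to exactly $F^1\hone{\Qp}{V}$, i.e.\ that the map $F^{00}\hone{\Qp}{V}/F^1\hone{\Qp}{V}\hookrightarrow\hone{\Qp}{W}=\hone{\Qp}{K}$ lands in $\hone[\nr]{\Qp}{K}$ only on the zero subspace — equivalently, that no nonzero class in $F^{00}\hone{\Qp}{V}$ that is not already in $F^1$ becomes unramified in $W\cong K$. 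Here I would use the critical assumption (\hyperref[item:C]{C}) to control dimensions of $V/F^1V$ and hence of $F^{00}V/F^1V$, and assumption (\hyperref[item:U]{U}) to rule out the cristalline extension of $K$ by $K(1)$ as a subquotient, which is precisely what would otherwise allow a nonzero unramified class to lift back; the semisimplicity (\hyperref[item:S]{S}) ensures the refined filtration $F^{00}, F^{11}$ behaves as stated in the Definition. Concretely: the exceptional quotient being trivial ($W\cong K$) means $\hone[\nr]{\Qp}{W}=\hone{\Qp}{K}^{\unr}$ is one-dimensional, and one checks via the inflation-restriction / Greenberg's local computation (the filtration $F^?\hone{\Qp}{V}$ and the exact sequences relating $\hone{\Qp}{F^1V}$, $\hone{\Qp}{F^{00}V}$, $\hone{\Qp}{W}$) that imposing (\hyperref[item:Bal2]{Bal2}) on top of (\hyperref[item:Bal1]{Bal1}) forces the class to already lie in $F^1\hone{\Qp}{V}$, giving $\bL_p(V)=\hone[\ord]{\Qp}{V}$.

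For the remaining equalities $\hone[\ord]{\Qp}{V}=\hone[g]{\Qp}{V}=\hone[f]{\Qp}{V}$, I would invoke the standard comparison between Greenberg's ordinary local condition and the Bloch--Kato conditions for an ordinary representation: for $V$ ordinary, $\hone[f]{\Qp}{V}$ is the image of $\hone{\Qp}{F^1V}$ intersected with the crystalline classes, and $\hone[g]{\Qp}{V}$ the de Rham ones; the discrepancy between $\hone[f]{}$, $\hone[g]{}$, and $\hone[\ord]{}$ is governed exactly by subquotients isomorphic to $K$, $K(1)$, or the crystalline extension between them. Assumption (\hyperref[item:U]{U}) kills the extension of $K$ by $K(1)$, and assumption (\hyperref[item:Tp]{T$^\prime$}) pins down the shape of the exceptional subquotient so that no $\hone[f]{}\subsetneq\hone[g]{}$ gap can open; combined with the semisimplicity (\hyperref[item:S]{S}) of the graded pieces this forces $\hone[f]{}=\hone[g]{}=\hone[\ord]{}$ locally at $p$. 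Assembling the local identifications at every place into the global Selmer kernels then yields $\bSel_\QQ(V)=\Sel_\QQ(V)=\hone[g]{\QQ}{V}=\hone[f]{\QQ}{V}$.

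I expect the main obstacle to be the precise bookkeeping at $p$ in the first step — verifying that (\hyperref[item:Bal2]{Bal2}) does not enlarge $\bL_p(V)$ beyond $F^1\hone{\Qp}{V}$ — since this is where assumptions (\hyperref[item:C]{C}), (\hyperref[item:U]{U}), (\hyperref[item:S]{S}), and (\hyperref[item:Tp]{T$^\prime$}) all have to be used in concert, and one must be careful about the difference between $\hone{\Qp}{W}$ and its unramified part when $W$ is trivial (so that $H^0$ and $H^2$ contributions to the local Euler characteristic are nonzero).
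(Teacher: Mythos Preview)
Your strategy of comparing local conditions at $p$ is right, and your treatment of the last two equalities (citing the standard comparison between the ordinary condition and the Bloch--Kato conditions) matches what the paper does: it simply invokes Flach for $\Sel_\QQ(V)=\hone[g]{\QQ}{V}$ and \cite[Corollary~3.8.4]{BK90} for $\hone[g]{}=\hone[f]{}$.

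However, there is a genuine error in your first step. You assert
\[
\bL_p(V)=\hone[\ord]{\Qp}{V}=F^1\hone{\Qp}{V},
\]
but the second equality is false, and so your plan to prove the first by squeezing both sides down to $F^1\hone{\Qp}{V}$ cannot work. Concretely: the kernel of $F^{00}\hone{\Qp}{V}\to\hone{\Qp}{W}$ is exactly $F^1\hone{\Qp}{V}$ (from the long exact sequence of $0\to F^1V\to F^{00}V\to W\to0$), so the image of $F^1\hone{\Qp}{V}$ in $\hone{\Qp}{W}$ is \emph{zero}. But (\hyperref[item:Bal2]{Bal2}) requires the image of $\bL_p(V)$ in $\hone{\Qp}{W}$ to be $\hone[\nr]{\Qp}{W}\cong K$, which is one-dimensional since $W\cong K$. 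Hence $\bL_p(V)$ is strictly larger than $F^1\hone{\Qp}{V}$. Likewise, $\hone[\ord]{\Qp}{V}$ is the kernel of restriction to $\hone{I_p}{V/F^1V}$, whereas $F^1\hone{\Qp}{V}$ is the kernel of the map to $\hone{\Qp}{V/F^1V}$; their quotient embeds in $\hone[\nr]{\Qp}{V/F^1V}\cong\hone{G_p/I_p}{F^0V/F^1V}$, which again picks up a copy of $K$ coming from $W$. So your claim that ``(Bal2) cuts $\bL_p$ back down to exactly $F^1\hone{\Qp}{V}$'' is exactly backwards: (Bal2) forces $\bL_p$ to contain a class whose image in $\hone{\Qp}{W}$ is the nonzero unramified class.

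The paper's argument avoids $F^1\hone{\Qp}{V}$ entirely and shows $\bL_p(V)=L_p(V)$ directly by two diagram chases. For $\bL_p\subseteq L_p$: a class in $\bL_p$ lifts (uniquely) to $\hone{\Qp}{F^{00}V}$, and (\hyperref[item:Bal2]{Bal2}) says its image in $\hone{I_p}{W}$ vanishes, hence so does its image in $\hone{I_p}{V/F^1V}$. For $L_p\subseteq\bL_p$: one first shows that any $c\in L_p$ already lies in $F^{00}\hone{\Qp}{V}$ by proving $\hone{G_p/I_p}{(V/F^{00}V)^{I_p}}=0$ (here one uses that $\Frob_p-1$ is invertible on $F^0V/F^{00}V$ by the very definition of $F^{00}$), and then checks that the lift maps to an unramified class in $\hone{\Qp}{W}$ using injectivity of $\hone{I_p}{W}\to\hone{I_p}{V/F^1V}$. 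This is where the actual work happens, and it does not reduce to the inclusion you proposed.
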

	\begin{proof}
		The second equality is due to Flach (\cite[Lemma 2]{Fl90}) and the last equality follows from \cite[Corollary 3.8.4]{BK90}. We proceed to prove the first equality. The local conditions at $v\neq p$ are the same for $\bSel_\QQ(V)$ and $\Sel_\QQ(V)$ so we are left to show that $\bL_p(V)=L_p(V)$.

		Let $c\in\bL_p(V)$. Condition (\hyperref[item:Bal1]{Bal1}) implies that there is $c^\prime\in\hone{\Qp}{F^{00}V}$ mapping to $c$. By (\hyperref[item:Bal2]{Bal2}), the image of $c^\prime$ under the map in the bottom row of the commutative diagram
		\[
			\xymatrix{\hone{\Qp}{V}\ar@{->}[r] &\hone{I_p}{V/F^1V} \\
								\hone{\Qp}{F^{00}V}\ar@{->}[u] \ar@{->}[r] & \hone{I_p}{W}\ar@{->}[u]
			}
		\]
		is zero. Thus, $c$ is in the kernel of the map in the top row, which is exactly $L_p(V)$.

		For the reverse equality, let $c\in L_p(V)$ and consider the commutative diagram
		\[\xymatrix{\hone{\Qp}{V/F^{00}V}\ar@{->}[r]^{f_2} & \hone{I_p}{V/F^{00}V} \\
				c\in\hone{\Qp}{V}\ar@{->}[r]^{f_1} \ar@{->}[u]^{f_3} & \hone{I_p}{V/F^1V} \ar@{->}[u] \\
				\hone{\Qp}{F^{00}V}\ar@{->}[u] \ar@{->}[r] &\hone{I_p}{W}. \ar@{->}[u]^{f_4}
		}\]
		The local condition $L_p(V)$ satisfies (\hyperref[item:Bal1]{Bal1}) if $c\in\ker f_3$. By definition, $c\in\ker f_1$, so we show that $\ker f_2=0$. By inflation-restriction, $\ker f_2$ is equal to
		\[ \im\left(\hone{G_p/I_p}{\left(V/F^{00}V\right)^{I_p}}\longrightarrow\hone{\Qp}{V/F^{00}V}\right).
		\]
		Note that $\left(V/F^{00}V\right)^{I_p}=F^0V/F^{00}V$. The pro-cyclic group $G_p/I_p$ has (topological) generator $\Frob_p$, so
		\[ \hone{G_p/I_p}{F^0V/F^{00}V}\cong(F^0V/F^{00}V)/\left((\Frob_p-1)(F^0V/F^{00}V)\right)=0
		\]
		where the last equality is because $F^{00}V$ was defined to be exactly the part of $F^0V$ on which $\Frob_p$ acts trivially (mod $F^1V$). Thus, $L_p(V)$ satisfies (\hyperref[item:Bal1]{Bal1}), so there is a $c^\prime\in\hone{\Qp}{F^{00}V}$ mapping to $c$. Its image in $\hone{I_p}{V/F^1V}$ is trivial, so it suffices to show that $\ker f_4=0$ to conclude that $L_p(V)$ satisfies (\hyperref[item:Bal2]{Bal2}). By the long exact sequence in cohomology, the exactness (on the right) of
		\[ 0\longrightarrow W^{I_p}\longrightarrow(V/F^1V)^{I_p}\longrightarrow(V/F^{00}V)^{I_p}\longrightarrow0
		\]
		shows that $\ker f_4=0$.
	\end{proof}
	\begin{remark}
		In fact, this result is still valid if (\hyperref[item:Tp]{T$^\prime$}) is relaxed to simply $F^{11}V=F^1V$ (see \cite[Lemma 1.3.4]{H-PhD}).
	\end{remark}

\subsection{Greenberg's \texorpdfstring{$L$}{\textit{L}}-invariant}\label{sec:GrLinvar}
	We now proceed to define Greenberg's $L$-invariant. To do so, we impose one final condition on $V$, namely
	\begin{itemize}
		\item[(Z)]\label{item:Z} the balanced Selmer group of $V$ is zero: $\bSel_\QQ(V)=0$.
	\end{itemize}
	This will allow us to define a one-dimensional global subspace $\Hgx$ in a global Galois cohomology group (via some local conditions) whose image in $\hone{\Qp}{W}$ will be a line. The slope of this line is the $L$-invariant of $V$.

	Let $\Sigma$ denote the set of primes of $\QQ$ ramified for $V$, together with $p$ and $\infty$, let $\QQ_\Sigma$ denote the maximal extension of $\QQ$ unramified outside $\Sigma$, and let $G_\Sigma:=\Gal(\QQ_\Sigma/\QQ)$. By definition, $\bSel_\QQ(V)~\subseteq~\hone{G_\Sigma}{V}$. The Poitou--Tate exact sequence with local conditions $\bL_v(V)$ yields the exact sequence
	\[ 0\longrightarrow\bSel_\QQ(V)\longrightarrow\hone{G_\Sigma}{V}\longrightarrow\bigoplus_{v\in\Sigma}\hone{\QQ_v}{V}/\bL_v(V)\longrightarrow\bSel_\QQ(V^\ast).
	\]
	Combining this with assumption (\hyperref[item:Z]{Z}) and proposition \ref{prop:balanced} gives an isomorphism
	\begin{equation}\label{eqn:globalisom}
		\hone{G_\Sigma}{V}\cong\bigoplus_{v\in\Sigma}\hone{\QQ_v}{V}/\bL_v(V).
	\end{equation}
	\begin{definition}
		Let $\Hgx$ be the one-dimensional subspace\footnote{This is the subspace denoted $\wt{\mathbf{T}}$ in \cite{G94}. Page 161 of \textit{loc.\ cit.} shows that it is one-dimensional.} of $\hone{G_\Sigma}{V}$ corresponding to the subspace $F^{00}\hone{\Qp}{V}/\bL_p(V)$ of $\bigoplus_{v\in\Sigma}\hone{\QQ_v}{V}/\bL_v(V)$ under the isomorphism in (\ref{eqn:globalisom}).
	\end{definition}

	By definition of $F^{00}V$, we know that $(V/F^{00}V)^{G_p}=0$. Hence, we have injections
	\[ \hone{\Qp}{F^{00}V}\hookrightarrow\hone{\Qp}{V}
	\]
	and
	\[ \hone{\Qp}{W}\hookrightarrow\hone{\Qp}{V/F^1V}.
	\]
	\begin{definition}
		Let $\Hlx\subseteq\hone{\Qp}{W}$ be the image of $\Hgx$ in the bottom right cohomology group in the commutative diagram
		\[
			\xymatrix{\hone{G_\Sigma}{V}\ar@{->}[r]	&\hone{\Qp}{V}\ar@{->}[r]	&\hone{\Qp}{V/F^1V} \\
					\Hgx\ar@{->}[r]\ar@{}[u]|<<<{\LARGE \begin{rotate}{90}$\subseteq$\end{rotate}}	&F^{00}\hone{\Qp}{V}\ar@{}[u]|<<<{\LARGE \begin{rotate}{90}$\subseteq$\end{rotate}} \\
					&	\hone{\Qp}{F^{00}V}\ar@{->}[r]\ar@{}[u]|<<<{\LARGE \begin{rotate}{90}$\cong$\end{rotate}}	&\hone{\Qp}{W}.\ar@{^(->}[uu]
			}
		\]
	\end{definition}
	\begin{lemma}\mbox{}
		\begin{enumerate}
			\item $\dim_K\Hlx=1$,
			\item $\Hlx\cap\hone[\nr]{\Qp}{W}=0$.
		\end{enumerate}
	\end{lemma}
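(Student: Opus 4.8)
The two claims concern the line $\Hlx = \im(\Hgx \to \hone{\Qp}{W})$, and both should follow by tracing through the commutative diagram in the definition together with the characterizing properties (\hyperref[item:Bal1]{Bal1})--(\hyperref[item:Bal2]{Bal2}) of $\bL_p(V)$. For part (a), I would argue that the composite map $\Hgx \to F^{00}\hone{\Qp}{V} \to \hone{\Qp}{W}$ has trivial kernel on $\Hgx$. Indeed, $\Hgx$ maps \emph{isomorphically} onto $F^{00}\hone{\Qp}{V}/\bL_p(V)$ by its very definition (via the isomorphism \eqref{eqn:globalisom}), so it suffices to show that the map $F^{00}\hone{\Qp}{V}/\bL_p(V) \to \hone{\Qp}{W}$ induced by $F^{00}\hone{\Qp}{V} \xrightarrow{\sim} \hone{\Qp}{F^{00}V} \to \hone{\Qp}{W}$ is injective. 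The kernel of $\hone{\Qp}{F^{00}V} \to \hone{\Qp}{W}$ is the image of $\hone{\Qp}{F^{11}V}$ (using the exact sequence $0 \to F^{11}V \to F^{00}V \to W \to 0$ and the fact that $W^{G_p} = K \neq 0$ need not make this surjective, but the relevant connecting map lands in $H^1$), i.e.\ it is $F^{11}\hone{\Qp}{V}$; and $F^{11}\hone{\Qp}{V} \subseteq \bL_p(V)$ by (\hyperref[item:Bal1]{Bal1}). Hence the induced map on the quotient by $\bL_p(V)$ is injective, and since $\Hgx$ is one-dimensional (cited from \cite{G94}, p.\ 161) and maps injectively, $\dim_K \Hlx = 1$.

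**Part (b).** For the second claim I would use property (\hyperref[item:Bal2]{Bal2}): by definition $\im(\bL_p(V) \to \hone{\Qp}{W}) = \hone[\nr]{\Qp}{W}$. Now $\Hgx$ maps into $F^{00}\hone{\Qp}{V}$, and modulo $\bL_p(V)$ it spans the whole quotient; chasing the diagram, an element of $\Hlx$ lying in $\hone[\nr]{\Qp}{W}$ would mean that the corresponding class $c' \in \hone{\Qp}{F^{00}V}$ has image in $\hone{\Qp}{W}$ inside $\hone[\nr]{\Qp}{W} = \im(\bL_p(V) \to \hone{\Qp}{W})$. Adjusting $c'$ by an element of $\bL_p(V)$ (which is legitimate since $\bL_p(V) \subseteq F^{00}\hone{\Qp}{V}$ by (\hyperref[item:Bal1]{Bal1}), and such an adjustment does not change the class in $F^{00}\hone{\Qp}{V}/\bL_p(V)$, hence comes from $\Hgx$ plus something killed), one sees that this class lies in the kernel of $F^{00}\hone{\Qp}{V} \to \hone{\Qp}{W}$, i.e.\ in the image of $\bL_p(V)$ together with $F^{11}\hone{\Qp}{V}$ — but $F^{11}\hone{\Qp}{V} \subseteq \bL_p(V)$, so the class is already in $\bL_p(V)$, forcing it to be $0$ in $\Hgx \cong F^{00}\hone{\Qp}{V}/\bL_p(V)$. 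Since the composite $\Hgx \to \hone{\Qp}{F^{00}V} \to \hone{\Qp}{W}$ is injective by part (a), this forces the original class in $\Hlx$ to vanish.

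**Main obstacle.** The delicate point is bookkeeping the three nested objects $F^{11}\hone{\Qp}{V} \subseteq \bL_p(V) \subseteq F^{00}\hone{\Qp}{V}$ and being precise about which cohomology groups the connecting homomorphisms land in; in particular I must verify that the kernel of $\hone{\Qp}{F^{00}V} \to \hone{\Qp}{W}$ is \emph{exactly} the image of $\hone{\Qp}{F^{11}V}$, which requires knowing that the connecting map $W^{G_p} \to \hone{\Qp}{F^{11}V}$ coming from $0 \to F^{11}V \to F^{00}V \to W \to 0$ does not obstruct this identification, or else tracking its image carefully. Under (\hyperref[item:Tp]{T$^\prime$}) we have $F^{11}V = F^1V$ and $W \cong K$, so $W^{G_p} = K$ and the relevant $H^0$'s and connecting maps are small and explicit — I expect this makes the diagram chase entirely routine once set up, so the real content is just assembling (\hyperref[item:Bal1]{Bal1}), (\hyperref[item:Bal2]{Bal2}), and the one-dimensionality of $\Hgx$ correctly.
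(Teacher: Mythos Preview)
Your approach matches the paper's, which dispatches the lemma in one line: ``This follows immediately from the definitions of $\Hgx$ and of $\bL_p(V)$, together with assumption (\hyperref[item:U]{U}).'' Your diagram chase is a correct unpacking of this.

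One phrasing slip to clean up: there is no induced map $F^{00}\hone{\Qp}{V}/\bL_p(V)\to\hone{\Qp}{W}$, because $\bL_p(V)$ does \emph{not} map to $0$ in $\hone{\Qp}{W}$ (by (\hyperref[item:Bal2]{Bal2}) it surjects onto $\hone[\nr]{\Qp}{W}$). What your computation actually establishes---and what suffices---is that the kernel of $F^{00}\hone{\Qp}{V}\to\hone{\Qp}{W}$ is $F^{11}\hone{\Qp}{V}\subseteq\bL_p(V)$; hence any $x\in\Hgx$ with trivial image in $\hone{\Qp}{W}$ has its local class in $\bL_p(V)$, so dies in $F^{00}\hone{\Qp}{V}/\bL_p(V)$, forcing $x=0$ since $\Hgx$ maps isomorphically onto that quotient. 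Also, your ``main obstacle'' is not one: the long exact sequence always identifies $\ker\!\big(\hone{\Qp}{F^{00}V}\to\hone{\Qp}{W}\big)$ with $\im\!\big(\hone{\Qp}{F^{11}V}\big)$, independently of the connecting map at the $H^0$ level.
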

	\begin{proof}
		This follows immediately from the definitions of $\Hgx$ and of $\bL_p(V)$, together with assumption (\hyperref[item:U]{U}).
	\end{proof}

	There are canonical coordinates on $\hone{\Qp}{W}\cong\Hom(G_{\Qp},W)$ given as follows. Every homomorphism $\phi:G_{\Qp}\rightarrow W$ factors through the maximal pro-$p$ quotient of $G^{\ab}_{\Qp}$, which is $\Gal(\mathbf{F}_\infty/\Qp)$, where $\mathbf{F}_\infty$ is the compositum of two $\Zp$-extensions of $\Qp$: the cyclotomic one, $\QQ_{p,\infty}$, and the maximal unramified abelian extension $\Qp^{\nr}$. Let
	\[ \Gamma_\infty:=\Gal(\QQ_{p,\infty},\Qp)\cong\Gal(\mathbf{F}_\infty,\Qp^{\nr})
	\]
	and
	\[ \Gamma_{\nr}:=\Gal(\Qp^{\nr},\Qp)\cong\Gal(\mathbf{F}_\infty,\QQ_{p,\infty}),
	\]
	then
	\[ \Gal(\mathbf{F}_\infty,\Qp)=\Gamma_\infty\times\Gamma_{\nr}.
	\]
	Therefore, $\hone{\Qp}{W}$ breaks up into $\Hom(\Gamma_\infty,W)\times\Hom(\Gamma_{\nr},W)$. We have
	\[\Hom(\Gamma_\infty,W)=\Hom(\Gamma_\infty,\Qp)\otimes W		
	\]
	and
	\[\Hom(\Gamma_{\nr},W)=\Hom(\Gamma_{\nr},\Qp)\otimes W.		
	\]
	Composing the $p$-adic logarithm with the cyclotomic character provides a natural basis of\linebreak	
	$\Hom(\Gamma_\infty,\Qp)$, and the function $\ord_p:\Frob_p\mapsto1$ provides a natural basis of $\Hom(\Gamma_{\nr},\Qp)$. Coordinates are then provided by the isomorphisms
	\[ \func{\Hom(\Gamma_\infty,W)}{W}{\log_p\chic\otimes w}{w}
	\]
	and
	\[ \func{\Hom(\Gamma_{\nr},W)}{W}{\ord_p\otimes w}{w.}
	\]
	The $L$-invariant of $V$ is the slope of $\Hlx$ with respect to these coordinates.

	Specifically, we will compute the $L$-invariant in section \ref{sec:calcLinvar} by constructing a global class\linebreak	
	$[c]~\in~\hone{\QQ}{V}$ satisfying
	\begin{itemize}
		\item[(CL1)]\label{item:CL1} $[c_v]\in\hone[\nr]{\QQ_v}{V}$ for all $v\in\Sigma\setminus\{p\}$,
		\item[(CL2)]\label{item:CL2} $[c_p]\in F^{00}\hone{\Qp}{V}$,
		\item[(CL3)]\label{item:CL3} $[c_p]\not\equiv0\ \mathrm{mod}\ F^1V$.
	\end{itemize}
	This class then generates $\Hgx$, so its image $[\ol{c}_p]\in\hone{\Qp}{W}$ generates $\Hlx$. Let $u\in\Zp^\times$ be any principal unit, so that under our normalizations, $\chic(\rec(u))=u^{-1}$. Then, the coordinates of $[\ol{c}_p]$ are given by
	\begin{equation}\label{eqn:coords}
		\left(-\frac{1}{\log_pu}\ol{c}_p(\rec(u)),\ol{c}_p(\Frob_p)\right)
	\end{equation}
	where $\ol{c}_p$ is a cocyle in $[\ol{c}_p]$. Note that these coordinates are independent of the choice of $u$. We then have the following formula for the $L$-invariant of $V$:
	\begin{equation}\label{eqn:linvarslop}
		\mathcal{L}(V)=\frac{\ol{c}_p(\Frob_p)}{-\frac{1}{\log_pu}\ol{c}_p(\rec(u))}.
	\end{equation}

\subsection{Symmetric power \texorpdfstring{$L$}{\textit{L}}-invariants of ordinary cusp forms}\label{sec:sympowgeneral}
	Let $f$ be a $p$-ordinary,\footnote{More specifically, $\iota_\infty$-ordinary, in the sense that $\ord_p(\iota_\infty^{-1}(a_p))=0$, where $a_p$ is the $p$th Fourier coefficient of $f$.} holomorphic, cuspidal, normalized newform of weight $k\geq2$, level $\Gamma_1(N)$ (prime to $p$), and trivial character. Let $E=\QQ(f)$ be the field generated by the Fourier coefficients of $f$. Let $\mf{p}_0|p$ be the prime of $E$ above $p$ corresponding to the fixed embedding $\iota_p$, and let\linebreak		
	$\rho_f:G_\QQ\rightarrow\gl(V_f)$ be the contragredient of the $\mf{p}_0$-adic Galois representation (occurring in \'etale cohomology) attached to $f$ by Deligne \cite{D71} on the two-dimensional vector space $V_f$ over $K:=E_{\mf{p}_0}$. Let $\alpha_p$ denote the root of $x^2-a_px+p^{k-1}$ which is a $p$-adic unit. The $p$-ordinarity assumption implies that
	\[ \rho_f|_{G_p}\sim\mtrx{\chic^{k-1}\delta^{-1}}{\phi}{0}{\delta}
	\]
	where $\delta$ is the unramified character sending $\Frob_p$ to $\alpha_p$ (\cite[Theorem 2.1.4]{W88}). Thus, $\rho_f$ is ordinary. Note that assumption (\hyperref[item:S]{S}) is automatically satisfied by all (Tate twists of) symmetric powers of $\rho_f$ since all graded pieces of the ordinary filtration are one-dimensional. For condition (\hyperref[item:U]{U}) to be violated, we have must have $k=2$ and $\alpha_p=1$, but the Hasse bound shows that this is impossible.
	\begin{lemma}\label{lem:exceptwists}
		If $(\Sym^n\!\rho_f)(r)$ is an exceptional, critical Tate twist of $\rho_f$, then $n\equiv2\ (\mod 4)$, $r=\frac{n}{2}(1-k)$ or $\frac{n}{2}(1-k)+1$, and $k$ is even. Furthermore, the exceptional subquotient is isomorphic to K or K(1), respectively.
	\end{lemma}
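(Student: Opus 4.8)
### Proof proposal

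The plan is to work out the restriction of $(\Sym^n\!\rho_f)(r)$ to $G_p$ explicitly using the given matrix shape $\rho_f|_{G_p}\sim\left(\begin{smallmatrix}\chic^{k-1}\delta^{-1} & \phi\\ 0 & \delta\end{smallmatrix}\right)$, and then read off exactly when the conditions (\hyperref[item:C]{C}) and (\hyperref[item:Tp]{T$^\prime$}) (equivalently, exceptionality in the sense relevant here, i.e.\ $W\neq 0$) can hold. First I would note that the semisimplification of $\rho_f|_{I_p}$ has characters $\chic^{k-1}$ and $1$, so after restricting to inertia the graded pieces of $\Sym^n\!\rho_f$ are $\chic^{j(k-1)}$ for $j=0,1,\dots,n$, each appearing once; twisting by $r$ shifts these to $\chic^{j(k-1)+r}$. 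Ordering the filtration by decreasing Hodge--Tate weight, the piece $F^0/F^1$ (the ``$\gr^0$'' piece) exists iff some $j(k-1)+r=0$, and $F^1/F^2$ exists on which $I_p$ acts by $\chic$ iff some $j'(k-1)+r=1$. So $W\neq 0$ forces both $j(k-1)=-r$ and $j'(k-1)=1-r$ for integers $0\le j,j'\le n$; subtracting gives $(j'-j)(k-1)=1$, hence $k=2$ (and $j'=j+1$), \emph{or} we must be in the boundary situation. But $k=2$ with an exceptional twist is the crystalline-versus-semistable dichotomy; in fact when $k=2$ one checks using (\hyperref[item:U]{U}) (the Hasse bound argument already invoked in the text: $\alpha_p\neq 1$) that the relevant graded pieces are non-split appropriately, and I would instead observe that for $k=2$ the two ``adjacent'' pieces come from $\Sym$ of the \emph{same} Jordan--Hölder factors and this does not produce a $W\cong K$ unless one tracks the $\Frob_p$-action carefully.

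Let me restructure: the cleanest route avoids case-chasing on $k$. Exceptionality plus condition (\hyperref[item:U]{U}) means, concretely, that after restricting to $G_p$ there is a subquotient that is a (possibly split) extension of $K$ by $K$, or of $K(1)$ by $K$, sitting inside the ordinary filtration so that $\gr$ contributes a trivial $\Frob_p$-eigenvalue in degree $0$ and a trivial-up-to-$\chic$ one in degree $1$. The graded pieces of $\Sym^n\!\rho_f|_{G_p}$ are the unramified-twisted characters $\chic^{(k-1)j}\,\delta^{n-2j}$ for $j=0,\dots,n$ (here I use that the two eigen-characters are $\chic^{k-1}\delta^{-1}$ and $\delta$, so the symmetric power picks up $\chic^{(k-1)j}\delta^{-(j)}\cdot\delta^{n-j}=\chic^{(k-1)j}\delta^{n-2j}$). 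After the Tate twist by $r$, the degree-$i$ graded piece (in the filtration indexed by Hodge--Tate weight) is $\chic^{i}\delta^{n-2j}$ where $i=(k-1)j+r$. For $F^{00}/F^{11}=W\neq 0$ we need: a value $j_0$ with $(k-1)j_0+r=0$ \emph{and} $\delta^{n-2j_0}$ trivial on $\Frob_p$, i.e.\ $\alpha_p^{n-2j_0}=1$; and, for $F^1V=F^{11}V$ to fail to kill $W$ — actually for the \emph{definition} of $W$ to be nonzero we need the $\chic^0$ piece with trivial Frobenius to be strictly above a piece that becomes $\chic^1$. Running the Weil bound $|\alpha_p|=p^{(k-1)/2}$ forces $\alpha_p^m=1$ only for $m=0$, hence $n-2j_0=0$, i.e.\ $n$ even and $j_0=n/2$; then $r=-(k-1)\cdot n/2=\frac n2(1-k)$. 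The second possibility (the ``$K(1)$'' case) is the Tate twist by one more: $(k-1)j_0+r=1$ with $n-2j_0=0$ again, giving $r=\frac n2(1-k)+1$.

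So the key steps, in order, are: (i) compute the graded characters $\chic^{(k-1)j}\delta^{n-2j}$ of $\Sym^n\!\rho_f|_{G_p}$ and their Tate twists; (ii) invoke the Weil/Hasse bound to conclude $\alpha_p^{n-2j}=1\implies j=n/2$, forcing $n$ even; (iii) solve $(k-1)(n/2)+r\in\{0,1\}$ for $r$, obtaining the two stated values, and identify $W\cong K$ in the first case and $W\cong K(1)$ in the second directly from which graded piece realizes the trivial/cyclotomic $\Frob_p$-behaviour; (iv) finally, extract $n\equiv 2\pmod 4$ and $k$ even: criticality (\hyperref[item:C]{C}) requires $\dim V/F^1V=\dim V^-$; $\dim V^-$ for $\Sym^n\!\rho_f(r)$ is $\lceil (n+1)/2\rceil$ or $\lfloor (n+1)/2\rfloor$ depending on parities, while $\dim V/F^1V=\#\{j:(k-1)j+r\le 0\}$, and matching these with $r=\frac n2(1-k)$ pins down $n/2$ odd (hence $n\equiv 2\bmod 4$) and $k$ even. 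I expect step (iv), the bookkeeping of the complex-conjugation eigenspace dimension against the number of non-positive Hodge--Tate weights, to be the fiddly part — one has to be careful about the case $r=\frac n2(1-k)$ versus $r=\frac n2(1-k)+1$ and about whether the middle weight is counted — but it is a finite parity computation with no conceptual obstacle; the Weil bound in step (ii) is what does the real work.
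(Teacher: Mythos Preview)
Your restructured argument (from ``Let me restructure'' onward) is correct and is essentially the computation the paper gestures at. The paper's own proof is two lines: it cites \cite[Lemma~3.3]{RgS08} for the list of critical Tate twists and then says that picking out the exceptional ones is ``a quick computation, noting that $\delta$ is non-trivial.'' You have unpacked both halves by hand. For exceptionality, your use of the Ramanujan--Petersson/Weil bound $|\alpha_p|_{\CC}=p^{(k-1)/2}$ is exactly what is needed to force $n-2j_0=0$ (and hence $n$ even, $j_0=n/2$); strictly speaking the paper's ``$\delta$ non-trivial'' understates this, since one really needs $\alpha_p$ not to be a root of unity. For criticality, your step (iv) is the right bookkeeping: with $r=\tfrac{n}{2}(1-k)$ one gets $\dim V/F^1V=n/2+1$, and matching this against $\dim V^-$ (which is $n/2$ or $n/2+1$ according to the parity of $r$, since $\rho_f$ is odd) forces $r$ odd, hence $n/2$ odd and $k$ even; the twin case $r=\tfrac{n}{2}(1-k)+1$ works out symmetrically with $\dim V/F^1V=n/2$.

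Two small comments. First, your opening paragraph went astray by assuming $W\neq0$ requires \emph{both} a $\gr^0$ and a $\gr^1$ contribution; it only requires one, and you corrected this in the restructuring. Second, calling the bound the ``Weil/Hasse bound'' is fine informally, but in weight $k\geq2$ this is Deligne's proof of Ramanujan--Petersson, so you might name it as such; the paper itself only invokes the Hasse bound when discussing condition (\hyperref[item:U]{U}) in weight $2$.
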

	\begin{proof}
		The critical Tate twists are listed in \cite[Lemma 3.3]{RgS08}. Determining those that are exceptional is a quick computation, noting that $\delta$ is non-trivial.\comdone{(???is it?)}
	\end{proof}
	For the Tate twist by $\frac{n}{2}(1-k)+1$, the exceptional subquotient is isomorphic to $K(1)$, a case we did not treat in the previous section. However, Greenberg defines the $L$-invariant of such a representation as the $L$-invariant of its Tate dual, whose exceptional subquotient is isomorphic to the trivial representation. In fact, the Tate dual of the twist by $\frac{n}{2}(1-k)+1$ is the twist by $\frac{n}{2}(1-k)$.
	Accordingly, let $m$ be a positive odd integer, $n:=2m$, $\rho_n:=(\Sym^n\!\rho_f)(m(1-k))$, and assume $k$ is even. We present a basic setup for computing Greenberg's $L$-invariant $\mathcal{L}(\rho_n)$ using a deformation of $\rho_m:=\Sym^m\!\rho_f$. The main obstacle in carrying out this computation is to find a ``sufficiently rich'' deformation of $\rho_m$ to obtain a non-trivial answer. We do so below in the case $n=6$ for non-CM $f$ (of weight $\geq4$) by transferring $\rho_3$ to $\GSp(4)_{/\QQ}$ and using a Hida deformation on this group. The case $n=2$ has been dealt with by Hida in \cite{Hi04} (see also \cite[Chapter 2]{H-PhD}).

	We need a lemma from the finite-dimensional representation theory of $\gl(2)$ whose proof we leave to the reader.
	\begin{lemma}\label{lem:enddecomp}
		Let $\Std$ denote the standard representation of $\gl(2)$. Then, for $m$ an odd positive integer, there is a decomposition
		\[ \End\left(\Sym^m\!\Std\right)\cong\bigoplus_{i=0}^m\left(\Sym^{2i}\!\Std\right)\otimes\det^{-i}.
		\]
	\end{lemma}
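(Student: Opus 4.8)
The plan is to reduce the statement to the classical Clebsch--Gordan decomposition. Throughout, everything takes place in the category of finite-dimensional algebraic representations of $\gl(2)$ over a field of characteristic zero, where complete reducibility and highest-weight theory are available, so the isomorphisms below are isomorphisms of $\gl(2)$-representations.

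First I would compute the dual. Since $\Std^\vee\cong\Std\otimes\det^{-1}$, we get $\left(\Sym^m\!\Std\right)^\vee\cong\Sym^m\!\left(\Std\otimes\det^{-1}\right)\cong\left(\Sym^m\!\Std\right)\otimes\det^{-m}$, and therefore
\[ \End\left(\Sym^m\!\Std\right)\;\cong\;\left(\Sym^m\!\Std\right)\otimes\left(\Sym^m\!\Std\right)\otimes\det^{-m}. \]
Next I would invoke the Clebsch--Gordan formula
\[ \Sym^a\!\Std\otimes\Sym^b\!\Std\;\cong\;\bigoplus_{j=0}^{\min(a,b)}\left(\Sym^{a+b-2j}\!\Std\right)\otimes\det^{j}. \]
If a self-contained argument is wanted, this follows by highest-weight theory: the tensor product is completely reducible with highest weight $a+b$, so the summand $\Sym^{a+b}\!\Std$ splits off; tensoring the complementary summand with $\det^{-1}$ and inducting on $\min(a,b)$ — or simply matching formal characters of the diagonal torus — gives the formula. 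Taking $a=b=m$ yields $\left(\Sym^m\!\Std\right)^{\otimes 2}\cong\bigoplus_{j=0}^{m}\left(\Sym^{2m-2j}\!\Std\right)\otimes\det^{j}$. Substituting this into the previous display and reindexing via $i:=m-j$ turns $\Sym^{2m-2j}\!\Std$ into $\Sym^{2i}\!\Std$ and $\det^{\,j-m}$ into $\det^{-i}$, giving $\End\left(\Sym^m\!\Std\right)\cong\bigoplus_{i=0}^{m}\left(\Sym^{2i}\!\Std\right)\otimes\det^{-i}$, as claimed.

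There is no serious obstacle here: the only input beyond formal manipulation is the Clebsch--Gordan decomposition, which is entirely standard. The hypothesis that $m$ is odd is not actually needed for the displayed isomorphism (it is merely the case relevant to the sequel); note that each summand occurs with multiplicity one and the summands are pairwise non-isomorphic since $\Sym^{2i}\!\Std\otimes\det^{-i}$ has dimension $2i+1$, and these dimensions are distinct for $0\le i\le m$.
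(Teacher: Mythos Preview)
Your proof is correct. The paper does not give a proof of this lemma (it is explicitly ``left to the reader''), so there is nothing to compare against; your reduction to the Clebsch--Gordan formula for $\gl(2)$ is the standard route and works as written, and your observation that the oddness of $m$ plays no role in the decomposition itself is also correct.
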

	Since $\det\rho_f=\chic^{k-1}$, this lemma implies that $\rho_n$ occurs as a (global) direct summand in $\End\rho_m$. A deformation of $\rho_m$ provides a class in $\hone{\QQ}{\End\rho_m}$. If its projection to $\hone{\QQ}{\rho_n}$ is non-trivial (and satisfies conditions (\hyperref[item:CL1]{CL1--3}) of the previous section), then it generates $\Hgx$ and can be used to compute $\mathcal{L}(\rho_n)$.

	An obvious choice of deformation of $\rho_m$ is the symmetric $m$th power of the Hida deformation of $\rho_f$. The cohomology class of this deformation has a non-trivial projection to $\hone{\QQ}{\rho_n}$ only when $m=1$ (i.e.\ $n=2$, the symmetric square). For larger $m$, a ``richer'' deformation is required. The aims of the remaining sections of this paper are to obtain such a deformation in the case $m=3$ ($n=6$) and to use it to find a formula for the $L$-invariant of $\rho_6$ in terms of derivatives of Frobenius eigenvalues varying in the deformation.

\section{Input from \texorpdfstring{$\GSp(4)_{/\QQ}$}{GSp(4)/\textbf{Q}}}
	We use this section to set up our notations and conventions concerning the group $\GSp(4)_{/\QQ}$, its automorphic representations, its Hida theory, and the Ramakrishnan--Shahidi symmetric cube lift from $\gl(2)_{/\QQ}$ to it. We only provide what is required for our calculation of the $L$-invariant of $\rho_6$.

\subsection{Notation and conventions}
	Let $V$ be a four-dimensional vector space over $\QQ$ with basis $\{e_1,\dots,e_4\}$ equipped with the symplectic form given by
	\[ J=\left(\begin{array}{cccc}
			&&&1\\
			&&1\\
			&-1\\
			-1
		\end{array}\right).
	\]
	Let $\GSp(4)$ be the group of symplectic similitudes of $(V,J)$, i.e.\ $g\in\gl(4)$ such that ${}^tgJg=\nu(g)J$ for some $\nu(g)\in\GG_m$. The stabilizer of the isotropic flag $0\subseteq\langle e_1\rangle\subseteq\langle e_1,e_2\rangle$ is the Borel subgroup $B$ of $\GSp(4)$ whose elements are of the form
	\newlength{\arrrow}
	\settowidth{\arrrow}{$\frac{c}{a}$}
	\renewcommand{\arraystretch}{1.2}
	\[
		\left(\begin{array}{x{\arrrow}x{\arrrow}x{\arrrow}x{\arrrow}}
			a&\ast&\ast&\ast\\
			&b&\ast&\ast\\
			&&\frac{c}{b}&\ast\\
			&&&\frac{c}{a}
		\end{array}\right).
	\]
	Writing an element of the maximal torus $T$ as
	\settowidth{\arrrow}{$\frac{\nu(t)}{t_2}$}
	\renewcommand{\arraystretch}{1.6}
	\[t=\left(\begin{array}{x{\arrrow}x{\arrrow}x{\arrrow}x{\arrrow}}
			t_1\\
			&t_2\\
			&&\frac{\nu(t)}{t_2}\\
			&&&\frac{\nu(t)}{t_1}
		\end{array}\right),
	\]
	we identify the character group $X^\ast(T)$ with triples $(a,b,c)$ satisfying $a+b\equiv c\ (\mod\ 2)$ so that
	\[ t^{(a,b,c)}=t_1^{a}t_2^{b}\nu(t)^{(c-a-b)/2}.
	\]
	The dominant weights with respect to $B$ are those with $a\geq b\geq0$. If $\Pi$ is an automorphic representation of $\GSp(4,\AA)$ whose infinite component $\Pi_\infty$ is a holomorphic discrete series, we will say $\Pi$ has weight $(a,b)$ if $\Pi_\infty$ has the same infinitesimal character as the algebraic representation of $\GSp(4)$ whose highest weight is $(a,b,c)$ (for some $c$). For example, a classical Siegel modular form of (classical) weight $(k_1,k_2)$ gives rise to an automorphic representation of weight $(k_1-3,k_2-3)$ under our normalizations.

\subsection{The Ramarkrishnan--Shahidi symmetric cube lift}
	We wish to move the symmetric cube of a cusp form $f$ to a cuspidal automorphic representation of $\GSp(4,\AA)$ in order use the Hida theory on this group to obtain an interesting Galois deformation of the symmetric cube of $\rho_f$. The following functorial transfer due to Ramakrishnan and Shahidi (\cite[Theorem A$^\prime$]{RS07}) allows us to do so in certain circumstances.
	\begin{theorem}[Ramakrishnan--Shahidi \cite{RS07}]
		Let $\pi$ be the cuspidal automorphic representation of $\gl(2,\AA)$ defined by a holomorphic, non-CM newform $f$ of even weight $k\geq2$, level $N$, and trivial character. Then, there is an irreducible cuspidal automorphic representation $\Pi$ of $\GSp(4,\AA)$ with the following properties
		\begin{enumerate}
			\item $\Pi_\infty$ is in the holomorphic discrete series with its $L$-parameter being the symmetric cube of that of $\pi$,
			\item\label{item:RSweight} $\Pi$ has weight $(2(k-2),k-2)$, trivial central character, and is unramified outside of $N$,
			\item $\Pi^K\neq0$ for some compact open subgroup $K$ of $\GSp(4,\AA_f)$ of level equal to the conductor of $\Sym^3\!\rho_f$,
			\item $L(s,\Pi)=L(s,\pi,\Sym^3)$, where $L(s,\Pi)$ is the dergree 4 spin $L$-function,
			\item\label{item:RSgeneric} $\Pi$ is weakly equivalent\footnote{Recall that ``weakly equivalent'' means that the local components are isomorphic for almost all places.} to a globally generic cuspidal automorphic representation,
			\item $\Pi$ is not CAP, nor endoscopic.\footnote{Recall that an irreducible, cuspidal, automorphic representation of $\GSp(4,\AA)$ is ``CAP'' if it is weakly equivalent to the induction of an automorphic representation on a proper Levi subgroup, and it is ``endoscopic'' if the local $L$-factors of its spin $L$-function are equal, at almost all places, to the product of the local $L$-factors of two cuspidal automorphic representations of $\gl(2,\AA)$ with equal central characters.}
		\end{enumerate}
	\end{theorem}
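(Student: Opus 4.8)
\emph{Proof proposal.} The plan is to produce $\Pi$ as a ``backward lift'' to $\GSp(4)\cong\mathrm{GSpin}(5)$ of the symmetric cube transfer of Kim and Shahidi. First I would invoke symmetric cube functoriality for $\gl(2)$: since $f$ is non-CM of weight $k\geq2$, the automorphic representation $\pi$ it generates is non-dihedral and, having infinite image, is of neither tetrahedral nor octahedral type, so $\Sym^3\!\pi$ is a \emph{cuspidal} automorphic representation of $\gl(4,\AA)$. A Clebsch--Gordan computation gives $\Lambda^2(\Sym^3\Std)\cong(\Sym^4\Std\otimes\det)\oplus\det^3$ for the standard representation $\Std$ of $\gl(2)$; since $\omega_\pi$ is trivial it follows that $\Sym^3\!\pi$ is self-dual and that $L(s,\Sym^3\!\pi,\Lambda^2)$ has a pole at $s=1$, coming from the $\det^3$ (i.e.\ trivial) summand, so $\Sym^3\!\pi$ is of \emph{symplectic type}.

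Being a self-dual cuspidal representation of $\gl(4,\AA)$ of symplectic type, $\Sym^3\!\pi$ lies in the image of the generic functorial transfer from $\GSp(4)$ to $\gl(4)$ (Asgari--Shahidi), so by the descent method of Ginzburg--Rallis--Soudry --- equivalently, by a converse-theorem argument in the style of Cogdell--Kim--Piatetski-Shapiro--Shahidi --- I would obtain a \emph{globally generic} cuspidal automorphic representation $\Pi^{\mathrm{gen}}$ of $\GSp(4,\AA)$, with trivial central character, whose degree-$4$ spin $L$-function is $L(s,\Sym^3\!\pi)=L(s,\pi,\Sym^3)$. Because $\Sym^3\!\pi$ is cuspidal (not an isobaric sum, nor an induction from a proper parabolic of $\gl(4)$), $\Pi^{\mathrm{gen}}$ is neither CAP nor endoscopic, which is (f); and matching local factors through the local Langlands correspondences for $\gl(4)$ and $\GSp(4)$ (Gan--Takeda) forces $\Pi^{\mathrm{gen}}_\ell$ to be unramified for every $\ell\nmid N$ and identifies the conductor of its finite part with that of $\Sym^3\!\rho_f$, giving (c) and the ramification assertions in (b)--(d).

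The remaining task is to replace the archimedean component by a holomorphic discrete series in the same local $L$-packet. The $L$-parameter of $\Sym^3\!\pi_\infty$ is $\Sym^3$ of the regular discrete series parameter of $\pi_\infty$, and this is again a regular discrete series parameter for $\GSp(4,\RR)$, whose archimedean $L$-packet contains both a large (generic) member --- the component of $\Pi^{\mathrm{gen}}_\infty$ --- and an (anti)holomorphic one. Exchanging $\Pi^{\mathrm{gen}}_\infty$ for the holomorphic member should yield an automorphic representation $\Pi$ that is still cuspidal, with $\Pi_\infty$ a holomorphic discrete series of the prescribed infinitesimal character; comparing that infinitesimal character with the Hodge--Tate weights $\{0,k-1,2(k-1),3(k-1)\}$ of $\Sym^3\!\rho_f$ reads off the weight $(2(k-2),k-2)$ in (b), and (a), (d), (e) persist since the spin $L$-function and the weak-equivalence class are unaffected.

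I expect the last step to be the crux: one must justify that the archimedean component of a cuspidal $\Pi^{\mathrm{gen}}$ can be swapped for another member of its local $L$-packet without leaving the cuspidal spectrum, and with multiplicity one. This is where the non-CAP and non-endoscopic hypotheses do real work --- they guarantee that the associated global ($A$-)packet is stable and tempered, so that each of its members occurs cuspidally with multiplicity one; this rests on the multiplicity results for $\GSp(4)$ of Weissauer, now subsumed in Arthur's endoscopic classification. Without such input one cannot a priori exclude that the holomorphic member fails to be automorphic or appears only residually. A secondary technicality is the exact conductor computation at the primes dividing $N$, which depends on the explicit form of the local transfer at the ramified places.
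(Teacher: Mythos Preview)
Your sketch is correct and follows precisely the strategy of Ramakrishnan--Shahidi as summarized in the paper's remark following the theorem: one first produces the cuspidal symmetric cube on $\gl(4)$ via Kim--Shahidi, checks it is self-dual of symplectic type, descends to a globally generic cuspidal $\Pi^{\mathrm{gen}}$ on $\GSp(4)$, and then switches the archimedean component within its $L$-packet to the holomorphic member. The paper does not give its own proof --- it quotes the result from \cite{RS07} --- and the only commentary it adds (how to read off the weight in (b), and that (e) is visible either from the construction or from Weissauer's result that non-CAP non-endoscopic cuspidals are weakly equivalent to globally generic ones) is exactly what you have identified.
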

	We remark that the weight in part \ref{item:RSweight} can be read off from the $L$-parameter of $\Pi_\infty$ given in \cite[(1.7)]{RS07}. As for part \ref{item:RSgeneric}, note that the construction of $\Pi$ begins by constructing a globally generic representation on the bottom of page 323 of \cite{RS07}, and ends by switching, in the middle of page 326, the infinite component from the generic discrete series element of the archimedean $L$-packet to the holomorphic one. Alternatively, in \cite{Wei08}, Weissauer has shown that any non-CAP non-endoscopic irreducible cuspidal automorphic representation of $\GSp(4,\AA)$ is weakly equivalent to a globally generic cuspidal automorphic representation.

\subsection{Hida deformation of \texorpdfstring{$\rho_3$}{\textrho\textthreeinferior} on \texorpdfstring{$\GSp(4)_{/\QQ}$}{GSp(4)/\textbf{Q}}}\label{sec:rho3t}
	Let $f$ a $p$-ordinary, holomorphic, non-CM, cuspidal, normalized newform of even weight $k\geq4$, level $\Gamma_1(N)$ (prime to $p$), and trivial character. We have added the non-CM hypothesis to be able to use the Ramakrishnan--Shahidi lift.\footnote{This is not really an issue as the CM case is much simpler.} According to lemma \ref{lem:exceptwists}, we only need to consider even weights. The restriction $k\neq2$ is forced by problems with the Hida theory on $\GSp(4)_{/\QQ}$ in the weight $(0,0)$.

	Tilouine and Urban (\cite{TU99}, \cite{U01}, \cite{U05}), as well as Pilloni (\cite{Pi10}, building on Hida (\cite{Hi02})) have worked on developing Hida theory on $\GSp(4)_{/\QQ}$. In this section, we describe the consequences their work has on the deformation theory of $\rho_3=\Sym^3\!\rho_f$ (where $\rho_f$ is as described in section \ref{sec:sympowgeneral}).

	We begin by imposing two new assumptions:
	\begin{itemize}
		\item[(\'Et)]\label{item:et} the universal ordinary $p$-adic Hecke algebra of $\GSp(4)_{/\QQ}$ is \'etale over the Iwasawa algebra at the height one prime corresponding to $\Pi$;
		\item[(RAI)]\label{item:RAI} the representation $\rho_3$ is residually absolutely irreducible.
	\end{itemize}

	Considering $\rho_3$ as the $p$-adic Galois representation attached to the Ramakrishnan--Shahidi lift $\Pi$ of $f$, we obtain a ring $\mathcal{A}$ of $p$-adic analytic functions in two variables $(s_1,s_2)$ on some neighbourhood of the point $(a,b)=(2(k-2),k-2)\in\Zp^2$, a free rank four module $\mathcal{M}$ over $\mathcal{A}$, and a deformation $\wt{\rho}_3:G_\QQ\rightarrow\Aut_\mathcal{A}(\mathcal{M})$ of $\rho_3$ such that $\wt{\rho}_3(a,b)=\rho_3$ and
		\begin{equation}\label{eqn:rho3t}
			 \wt{\rho}_3|_{G_p}\sim\left(\begin{array}{cccc}
								\theta_1\theta_2\mu_1 & \xi_{12} & \xi_{13} & \xi_{14} \\
								&\theta_2\mu_2 & \xi_{23} & \xi_{24} \\
								&&\theta_1\mu_2^{-1} & \xi_{34} \\
								&&&\mu_1^{-1}
							\end{array}\right)
		\end{equation}
		where the $\mu_i$ are unramified, and
		\begin{eqnarray}
			\mu_1(a,b)&=&\delta^{-3}\label{eqn:mu1} \\
			\mu_2(a,b)&=&\delta^{-1}\label{eqn:mu2} \\
			\theta_1(s_1,s_2)&=&\chic^{s_2+1}\label{eqn:theta1}\\
			\theta_1(a,b)&=&\chic^{k-1} \\
			\theta_2(s_1,s_2)&=&\chic^{s_1+2}\label{eqn:theta2}\\
			\theta_2(a,b)&=&\chic^{2(k-1)}.
		\end{eqnarray}
	\begin{remark}
		Assumption (\hyperref[item:RAI]{RAI}) allows us to take the integral version of \cite[Theorem 7.1]{TU99} (see the comment of \textit{loc.\ cit.} at the end of \S7) and assumption (\hyperref[item:et]{\'Et}) says that the coefficients are $p$-adic analytic. The shape of $\wt{\rho}_3|_{G_p}$ can be seen as follows. That four distinct Hodge--Tate weights show up can be seen by using \cite[Lemma 3.1]{U01} and the fact that both $\Pi$ and the representation obtained from $\Pi$ by switching the infinite component are automorphic. Applying Theorem 3.4 of \textit{loc.\ cit.} gives part of the general form of $\wt{\rho}_3|_{G_p}$ (taking into account that we work with the contragredient). The form the unramified characters on the diagonal take is due to $\wt{\rho}_3|_{G_p}$ taking values in the Borel subgroup $B$ (this follows from Corollary 3.2 and Proposition 3.4 of \textit{loc.\ cit.}). That the specializations of the $\mu_i$ and $\theta_i$ at $(a,b)$ are what they are is simply because $\wt{\rho}_3$ is a deformation of $\rho_3$.
	\end{remark}

	We may take advantage of assumption (\hyperref[item:et]{\'Et}) to determine a bit more information about the $\mu_i$. Indeed, let $\wt{\rho}_f$ denote the Hida deformation (on $\gl(2)_{/\QQ}$) of $\rho_f$, so that
	\[ \wt{\rho}_f|_{G_p}\sim\mtrx{\theta\mu^{-1}}{\xi}{0}{\mu}
	\]
	where $\theta,\mu,$ and $\xi$ are $p$-adic analytic functions on some neighbourhood of $s=k$, $\theta(s)=\chic^{s-1}$, and $\mu(s)$ is the unramified character sending $\Frob_p$ to $\alpha_p(s)$ (where $\alpha_p(s)$ is the $p$-adic analytic function giving the $p$th Fourier coefficients in the Hida family of $f$) (\cite[Theorem 2.2.2]{W88}).
	By \cite[Remark 9]{GhVa04}, we know that every arithmetic specialization of $\wt{\rho}_f$ is non-CM. We may thus apply the Ramakrishnan--Shahidi lift\comdone{(???trivial character?)} to the even weight specializations and conclude that $\Sym^3\!\wt{\rho}_f$ is an ordinary modular deformation of $\rho_3$. Assumption (\hyperref[item:et]{\'Et}) then implies that $\Sym^3\!\wt{\rho}_f$ is a specialization of $\wt{\rho}_3$. Since the weights of the symmetric cube lift of a weight $k^\prime$ cusp form are $(2(k^\prime-2),k^\prime-2)$, we can conclude that $\Sym^3\!\wt{\rho}_f$ is the ``sub-family'' of $\wt{\rho}_3$ where $s_1=2s_2$. Thus,
	\begin{eqnarray*}
		\mu_1(2s,s)&=&\mu^{-3}(s+2)\\
		\mu_2(2s,s)&=&\mu^{-1}(s+2).
	\end{eqnarray*}
	Applying the chain rule yields
	\begin{eqnarray}
		2\partial_1\mu_1(a,b)+\partial_2\mu_1(a,b)&=&-\frac{3\mu^\prime(k)}{\delta^4}\label{eqn:mu1relations}\\
		2\partial_1\mu_2(a,b)+\partial_2\mu_2(a,b)&=&-\frac{\mu^\prime(k)}{\delta^2}.\label{eqn:mu2relations}
	\end{eqnarray}

\section{Calculating the \texorpdfstring{$L$}{\textit{L}}-invariant}\label{sec:calcLinvar}
	For the remainder of this article, let $f$ a $p$-ordinary, holomorphic, non-CM, cuspidal, normalized newform of even weight $k\geq4$, level $\Gamma_1(N)$ (prime to $p$), and trivial character. Let $\rho_f$, $\rho_3$, and $\rho_6$ be as in section \ref{sec:sympowgeneral}, and let $W$ denote the exceptional subquotient of $\rho_6$. Furthermore, assume condition (\hyperref[item:Z]{Z}) that $\bSel_\QQ(\rho_6)=0$. We now put together the ingredients of the previous sections to compute Greenberg's $L$-invariant of $\rho_6$.

\subsection{Constructing the global Galois cohomology class}
	Recall that if $\rho_3^\prime$ is an infinitesimal deformation of $\rho_3$ (over $K[\epsilon]:=K[x]/(x^2)$), a corresponding cocycle $c_3^\prime:G_\QQ\rightarrow\End\rho_3$ is defined by the equation
	\[\rho_3^\prime(g)=\rho_3(g)(1+\epsilon c_3^\prime(g)).
	\]
	Let $\wt{\rho}_3$ be the deformation of $\rho_3$ constructed in section \ref{sec:rho3t}. Taking a first order expansion of the entries of $\wt{\rho}_3$ around $(a,b)=(2(k-2),k-2))$ in any given direction yields an infinitesimal deformation of $\rho_3$. We parametrize these as follows. A $p$-adic analytic function $F\in\mathcal{A}$ has a first-order expansion near $(s_1,s_2)=(a,b)$ given by
	\[ F(a+\epsilon_1,b+\epsilon_2)\approx F(a,b)+\epsilon_1\partial_1F(a,b)+\epsilon_2\partial_2F(a,b).
	\]
	We introduce a parameter $\Delta$ such that $\epsilon_1=(1-\Delta)\epsilon$ and $\epsilon_2=\Delta\epsilon$. Let $\wt{\rho}_{3,\Delta}$ denote the infinitesimal deformation of $\rho_3$ obtained by first specializing $\wt{\rho}_3$ along the direction given by $\Delta$, then taking the quotient by the ideal $((s_1,s_2)-(a,b))^2$. Let $c_{6,\Delta}$ denote the projection of its corresponding cocyle to $\rho_6$ (in the decomposition of lemma \ref{lem:enddecomp}).

\subsection{Properties of the global Galois cohomology class}
	To use $c_{6,\Delta}$ to compute the $L$-invariant of $\rho_6$, we must show that it satisfies conditions (CL1--3) of section \ref{sec:GrLinvar}. The proofs of lemma 1.2 and 1.3 of \cite{Hi07} apply to the cocycle $c_{6,\Delta}$ to show that it satisfies (\hyperref[item:CL1]{CL1}).\footnote{The deformation $\wt{\rho}_3$ clearly satisfies conditions (K$_3$1--4) of \cite[\S0]{Hi07} and our $c_{6,\Delta}$ is a special case of the cocycles Hida defines in the proof of lemma 1.2 of \textit{loc.\ cit.}}

	To verify conditions (\hyperref[item:CL2]{CL2}) and (\hyperref[item:CL3]{CL3}) (and to compute the $L$-invariant of $\rho_6$), we need to find an explicit formula for part of $c_{6,\Delta}|_{G_p}$. We know that
	\[ \rho_f|_{G_p}\sim\mtrx{\chic^{k-1}\delta^{-1}}{\phi}{0}{\delta}.
	\]
	Taking the symmetric cube (considered as a subspace of the third tensor power) yields
	\settowidth{\arrrow}{$\chic^{3(k-1)}\delta^{-1}$}
	\renewcommand{\arraystretch}{2.2}
	\[ \rho_3|_{G_p}\sim\left(
		\begin{array}{x{\arrrow}x{\arrrow}x{\arrrow}x{\arrrow}}
			\chic^{3(k-1)}\delta^{-3}	& \frac{3\chic^{2(k-1)}\phi}{\delta^2}	& \frac{3\chic^{k-1}\phi^2}{\delta}	& \phi^3 \\
								& \chic^{2(k-1)}\delta^{-1}			& 2\chic^{k-1}\phi				& \delta\phi^2 \\
								&							& \chic^{k-1}\delta				& \delta^2\phi \\
								&							&							& \delta^3
		\end{array}\right).
	\]
	Taking first-order expansions of the entries of $\wt{\rho}_3\rho_3^{-1}-I_4$, specializing along the direction given by $\Delta$, and projecting yields $c_{6,\Delta}$. However, since we are interested in an explicit formula for $c_{6,\Delta}|_{G_p}$, we need to determine a basis that gives the decomposition of lemma \ref{lem:enddecomp}. This can be done using the theory of raising and lowering operators. We obtain the following result.
	\begin{theorem}
		In such an aforementioned basis,
		\begin{equation}\label{eqn:c6formula}
			c_{6,\Delta}|_{G_p}\sim\left(\begin{array}{c}
							\ast\\
							(1-\Delta)\left(\frac{\partial_1\theta_2}{\chic^{2(k-1)}}-\frac{2\partial_1\theta_1}{\chic^{k-1}}-\delta^3\partial_1\mu_1+3\delta\partial_1\mu_2\right) \\
							\hspace{0.28in}+\Delta\left(\frac{\partial_2\theta_2}{\chic^{2(k-1)}}-\frac{2\partial_2\theta_1}{\chic^{k-1}}-\delta^3\partial_2\mu_1+3\delta\partial_2\mu_2\right)\\
							0
					\end{array}\right)
		\end{equation}
		where $\ast$ and $0$ are both $3\times1$, and all derivatives are evaluated at $(a,b)$.
	\end{theorem}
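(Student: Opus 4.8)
The plan is to compute the cocycle $c_{6,\Delta}$ explicitly by pushing forward the infinitesimal deformation $\wt{\rho}_{3,\Delta}$ along the projection $\End\rho_3 \twoheadrightarrow \rho_6$ coming from Lemma \ref{lem:enddecomp}, working entirely with the restriction to $G_p$ where everything is upper-triangular. First I would fix the decomposition of Lemma \ref{lem:enddecomp} concretely: in the basis $\{e_1,e_2\}$ of $\Std$ in which $\rho_f|_{G_p}$ is upper-triangular, $\Sym^3\Std$ has the ordered basis $e_1^3, e_1^2e_2, e_1e_2^2, e_2^3$, and $\End(\Sym^3\Std)\cong\bigoplus_{i=0}^3(\Sym^{2i}\Std)\otimes\det^{-i}$ can be realized via raising and lowering operators $X, Y$ (with $H=[X,Y]$): the summand $\Sym^6\Std\otimes\det^{-3}$, which is $\rho_6$ after the Tate twist, is the span of $\{Y^{\,j}\cdot(X^3)\}_{j=0}^{6}$ acting by the adjoint action, where $X^3$ is the highest-weight vector. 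The key point is that the $G_p$-filtration on $\rho_6$ corresponds to the descending filtration by powers of $Y$, and the exceptional subquotient $W\cong K$ sits in the middle — at the weight-zero position $Y^3\cdot(X^3)$ (this matches $F^{00}\rho_6/F^1\rho_6$ being one-dimensional). So to read off the relevant entry of $c_{6,\Delta}|_{G_p}$ I only need the component of the deformation cocycle along the middle weight vector.

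Next I would write $\wt{\rho}_3\rho_3^{-1} - I_4 = \epsilon\, c_3 + O(\epsilon^2)$ after specializing $(s_1,s_2)=(a+(1-\Delta)\epsilon,\, b+\Delta\epsilon)$, using \eqref{eqn:rho3t} together with the explicit matrix form of $\rho_3|_{G_p}$ displayed just above the theorem. Because both matrices are upper-triangular, $c_3|_{G_p}$ is upper-triangular, and its diagonal entries are the logarithmic derivatives of the four diagonal characters $\theta_1\theta_2\mu_1,\ \theta_2\mu_2,\ \theta_1\mu_2^{-1},\ \mu_1^{-1}$ of $\wt{\rho}_3$ against the diagonal characters $\chic^{3(k-1)}\delta^{-3},\ \chic^{2(k-1)}\delta^{-1},\ \chic^{k-1}\delta,\ \delta^3$ of $\rho_3$. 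Expanding along $\Delta$ gives diagonal entries of $c_3|_{G_p}$ equal to $(1-\Delta)\partial_1(\cdot)+\Delta\,\partial_2(\cdot)$ applied to each $\log$ of the corresponding character ratio, evaluated at $(a,b)$. Then I project onto the middle-weight line of $\rho_6\subseteq\End\rho_3$: in the raising/lowering basis, the weight-zero vector $Y^3\cdot(X^3)$ is a fixed linear combination of the standard weight-zero elements of $\End(\Sym^3\Std)$, namely the diagonal matrix units $E_{11}-E_{22},\ E_{22}-E_{33},\ E_{33}-E_{44}$ weighted by the combinatorial coefficients $1, 2, 3$ coming from $\Sym^3$ (these are exactly the coefficients appearing multiplying $\partial_j\theta_1$ and $\partial_j\mu_2$ in \eqref{eqn:c6formula}). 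Carrying out this contraction — that is, pairing $c_3|_{G_p}$'s diagonal with the covector dual to $Y^3(X^3)$ — produces precisely $\frac{\partial_j\theta_2}{\chic^{2(k-1)}}-\frac{2\partial_j\theta_1}{\chic^{k-1}}-\delta^3\partial_j\mu_1+3\delta\partial_j\mu_2$ for $j=1,2$, and the $(1-\Delta),\Delta$ weighting assembles the stated formula.

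Finally I would check that the remaining displayed entries are correct: the bottom entry ($3\times1$ block) is $0$ because it corresponds to the lowest-weight part $F^1\rho_6$, and an upper-triangular deformation cocycle has vanishing component in the strictly-lowest graded piece in the raising/lowering basis (the off-diagonal $\xi_{ij}$ do not contribute to the strictly-lower weights since $\rho_3\rho_3^{-1}$ off-diagonal terms sit above the diagonal); the top block labelled $\ast$ collects all the higher-weight components (weights $2,4,6$), whose precise form is irrelevant for verifying (CL2)–(CL3) and so is left unspecified. The main obstacle, and the step requiring genuine care rather than bookkeeping, is pinning down the correct change-of-basis from the matrix-unit basis of $\End(\Sym^3\Std)$ to the $\mathfrak{sl}_2$-representation-theoretic basis given by raising/lowering operators — in particular getting the combinatorial normalization ($1, 2, 3$ versus some other weights) exactly right, since the whole formula, and hence the eventual $L$-invariant, depends on these constants. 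This is where I would be most careful to match conventions with the $\Sym^3$ of $\rho_f|_{G_p}$ displayed above, using that the $\phi$-entries there already encode the $\mathfrak{sl}_2$-structure (the coefficients $3,3,1$ and $2$ on the superdiagonals).
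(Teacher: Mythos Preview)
Your approach is exactly the one the paper outlines: the paper itself gives no detailed proof, only the sentence ``This can be done using the theory of raising and lowering operators,'' after explaining that one takes first-order expansions of $\wt{\rho}_3\rho_3^{-1}-I_4$, specializes along $\Delta$, and projects to the $\rho_6$-summand of Lemma~\ref{lem:enddecomp}. Your argument that the bottom $3\times1$ block vanishes because an upper-triangular cocycle has no component in the negative-weight spaces of $\End(\Sym^3\Std)$ is correct and is precisely why the result holds.

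One concrete correction on the point you yourself flagged as delicate: the covector you pair against is \emph{not} $1\cdot(E_{11}-E_{22})+2\cdot(E_{22}-E_{33})+3\cdot(E_{33}-E_{44})$. A direct computation of $\ad(Y)^3 E_{14}$ (with $X,Y$ acting on $\Sym^3$ as $E_{12}+2E_{23}+3E_{34}$ and $3E_{21}+2E_{32}+E_{43}$) gives the weight-zero vector of the $\Sym^6$-summand proportional to $\mathrm{diag}(-1,3,-3,1)$, and the dual projector (annihilating the weight-zero vectors $(1,1,1,1)$, $(3,1,-1,-3)$, $(1,-1,-1,1)$ of the other summands) is again proportional to $(-1,3,-3,1)$. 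Pairing this against the logarithmic derivatives $(d_1,d_2,d_3,d_4)$ of the diagonal of $\wt{\rho}_3|_{G_p}$ yields, up to an overall scalar, exactly the displayed formula \eqref{eqn:c6formula}. So your strategy is right; only the specific combinatorial constants need to be fixed.
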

	Since the bottom three coordinates in \eqref{eqn:c6formula} are zero, the image of $c_{6,\Delta}|_{G_p}$ lands in $F^{00}\rho_6$, i.e.\ $c_{6,\Delta}$ satisfies (\hyperref[item:CL2]{CL2}). If we can show that the middle coordinate is non-zero, then $c_{6,\Delta}$ satisfies (\hyperref[item:CL3]{CL3}). In fact, we will show that $c_{6,\Delta}$ satisfies (\hyperref[item:CL3]{CL3}) if, and only if, $\Delta\neq1/3$ (in this latter case, we will show that $[c_{6,1/3}]=0$).

	Let $\ol{c}_{6,\Delta}$ denote the image of $c_{6,\Delta}$ in $\hone{\Qp}{W}$. Let
	\[ \alpha_p^{(i,j)}:=\partial_j\mu_i(a,b)(\Frob_p).
	\]
	\begin{corollary}\label{prop:c6coords}
		The coordinates of $\ol{c}_{6,\Delta}$, as in equation \eqref{eqn:coords}, are
		\[ \left(1-3\Delta,(1-\Delta)\left(-\alpha_p^3\alpha_p^{(1,1)}+3\alpha_p\alpha_p^{(2,1)}\right)+\Delta\left(-\alpha_p^3\alpha_p^{(1,2)}+3\alpha_p\alpha_p^{(2,2)}\right)\right).
		\]
		In particular, if $\Delta\neq1/3$, then $c_{6,\Delta}$ satisfies (\hyperref[item:CL3]{CL3}).
	\end{corollary}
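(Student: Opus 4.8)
The plan is to derive Corollary~\ref{prop:c6coords} directly from the explicit formula \eqref{eqn:c6formula} by computing the two coordinates of $\ol{c}_{6,\Delta}$ prescribed by \eqref{eqn:coords}, namely the $\Hom(\Gamma_\infty,W)$-component (the coefficient of $\log_p\chic\otimes w$) and the $\Hom(\Gamma_{\nr},W)$-component (the coefficient of $\ord_p\otimes w$). The key point is that the entries of $c_{6,\Delta}|_{G_p}$ displayed in \eqref{eqn:c6formula} involve the $p$-adic analytic functions $\theta_1,\theta_2$ (whose $(a,b)$-specializations are powers of $\chic$, hence ramified and contributing to the cyclotomic direction) and $\mu_1,\mu_2$ (which are unramified, hence contributing to the unramified direction). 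So I would first recall from \eqref{eqn:theta1} and \eqref{eqn:theta2} that $\theta_1=\chic^{s_2+1}$ and $\theta_2=\chic^{s_1+2}$, whence $\partial_1\theta_1=0$, $\partial_2\theta_1=\chic^{k-1}\log_p\chic$, $\partial_1\theta_2=\chic^{2(k-1)}\log_p\chic$, and $\partial_2\theta_2=0$ (all at $(a,b)$); these identities collapse the $\theta$-terms in the middle entry of \eqref{eqn:c6formula}.

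Substituting these into \eqref{eqn:c6formula}, the middle coordinate of $c_{6,\Delta}|_{G_p}$ becomes
\[
	(1-\Delta)\bigl(\log_p\chic-\delta^3\partial_1\mu_1+3\delta\partial_1\mu_2\bigr)+\Delta\bigl(-2\log_p\chic-\delta^3\partial_2\mu_1+3\delta\partial_2\mu_2\bigr),
\]
which I would then reorganize by separating the $\log_p\chic$-part from the unramified part. The coefficient of $\log_p\chic\otimes w$ is $(1-\Delta)-2\Delta=1-3\Delta$, giving the first coordinate. For the second coordinate, I evaluate the remaining unramified homomorphism at $\Frob_p$: since $\delta(\Frob_p)=\alpha_p$ and $\partial_j\mu_i(a,b)(\Frob_p)=\alpha_p^{(i,j)}$ by definition, the $-\delta^3\partial_1\mu_1+3\delta\partial_1\mu_2$ term evaluates to $-\alpha_p^3\alpha_p^{(1,1)}+3\alpha_p\alpha_p^{(2,1)}$ and likewise for the $\partial_2$ term, yielding exactly the stated second coordinate $(1-\Delta)(-\alpha_p^3\alpha_p^{(1,1)}+3\alpha_p\alpha_p^{(2,1)})+\Delta(-\alpha_p^3\alpha_p^{(1,2)}+3\alpha_p\alpha_p^{(2,2)})$. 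The final assertion is then immediate: if $\Delta\neq1/3$ the first coordinate $1-3\Delta$ is nonzero, so $\ol{c}_{6,\Delta}\neq0$ in $\hone{\Qp}{W}$, hence a fortiori $c_{6,\Delta}|_{G_p}\not\equiv0\bmod F^1\rho_6$, which is condition (\hyperref[item:CL3]{CL3}).

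The only real subtlety — and the step I would flag as needing care rather than being purely routine — is making sure the passage from the cocycle $c_{6,\Delta}|_{G_p}$ with values in $F^{00}\rho_6$ to its image $\ol{c}_{6,\Delta}$ in $\hone{\Qp}{W}\cong\Hom(G_{\Qp},W)$ is correctly normalized, so that the coordinate identification of \S\ref{sec:GrLinvar} (with the $-1/\log_p u$ factor and $\chic(\rec(u))=u^{-1}$) produces the clean coordinates stated, and in particular that no spurious sign or factor of $\log_p$ sneaks in from the relation between $\partial_j\theta_i$ and $\log_p\chic$. Concretely one must check that the entry displayed in \eqref{eqn:c6formula} is literally the value of the homomorphism $\ol{c}_{6,\Delta}$ (not, say, its value twisted by some power of $\chic$ coming from the diagonal of $\rho_6$), which is exactly why the middle position was singled out: it is the $F^{00}/F^1$-graded piece on which $G_p$ acts trivially, so the cocycle restricted there is genuinely a homomorphism into $W\cong K$. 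Granting that bookkeeping, everything else is a direct substitution, and I would present it as such, perhaps recording \eqref{eqn:mu1relations}–\eqref{eqn:mu2relations} for later use but not needing them here.
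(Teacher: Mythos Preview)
Your proposal is correct and follows essentially the same approach as the paper. The only organizational difference is that the paper first records a separate lemma computing $\theta'(s)(\Frob_p)=0$ and $\theta'(s)(\rec(u))/\chic^{s-1}(\rec(u))=-\log_pu$, then evaluates the middle entry of \eqref{eqn:c6formula} at $\rec(u)$ and $\Frob_p$ directly, whereas you rewrite the $\theta$-derivatives as multiples of $\log_p\chic$ and read off the cyclotomic and unramified components; these are the same computation packaged slightly differently.
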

	Before proving this, we state and prove a lemma.
	\begin{lemma}
		Recall that $\theta(s)=\chic^{s-1}$. For any integer $s\geq2$, and any principal unit $u$,
		\begin{enumerate}
			\item $\theta^\prime(s)(\Frob_p)=0$,
			\item $\displaystyle{\frac{\theta^\prime(s)(\rec(u))}{\chic^{s-1}(\rec(u))}=-\log_pu}$.
		\end{enumerate}
	\end{lemma}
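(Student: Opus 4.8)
The plan is to compute the character $\theta(s) = \chic^{s-1}$ explicitly on the two distinguished elements $\Frob_p$ and $\rec(u)$, differentiate with respect to the weight variable $s$, and read off the values. First I would recall that $\chic$ is unramified nowhere—in particular $\chic(\Frob_p)$ is not $1$; rather, since $\Frob_p$ is the \emph{arithmetic} Frobenius and we have normalized $\rec$ so that $\rec(p) = \Frob_p$, and since $\chic(\rec(x)) = x \cdot |x|_p$ for $x \in \Qp^\times$ under a standard normalization (or, more simply here, $\Frob_p$ acts on $\mu_{p^n}$ trivially mod $p$-power roots of unity only through the unramified quotient), the key point is that $\chic$ restricted to the unramified direction is trivial: $\chic(\Frob_p) = 1$. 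Hence $\chic^{s-1}(\Frob_p) = 1$ for all $s$, a constant function of $s$, so its derivative vanishes, giving part (a).

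For part (b), I would write $\theta(s)(\rec(u)) = \chic(\rec(u))^{s-1}$. Under the paper's normalization, $\chic(\rec(u)) = u^{-1}$ for a principal unit $u$. So $\theta(s)(\rec(u)) = u^{-(s-1)} = u^{1-s}$, viewed as an element of $\ZZ_p^\times$ (well-defined since $u$ is a principal unit, so $u^{1-s}$ makes sense for $s \in \ZZ_p$ via the exponential/logarithm, and in particular for integers $s \geq 2$). Differentiating $s \mapsto u^{1-s} = \exp((1-s)\log_p u)$ with respect to $s$ gives $\theta'(s)(\rec(u)) = -\log_p(u) \cdot u^{1-s} = -\log_p(u)\cdot \chic^{s-1}(\rec(u))$. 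Dividing both sides by $\chic^{s-1}(\rec(u))$, which is a unit and hence nonzero, yields $\theta'(s)(\rec(u))/\chic^{s-1}(\rec(u)) = -\log_p u$, which is part (b).

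The one point requiring a little care—and the only place I expect any friction—is making precise what $\theta'(s)$ \emph{means} as a cocycle (equivalently, as an element of $\Hom(G_{\Qp}, \Qp)$ or the relevant cohomology group), and checking that differentiating the character $g \mapsto \chic(g)^{s-1}$ of $G_{\Qp}$ with respect to $s$ commutes with evaluation at a fixed group element $g$. This is a standard fact about $p$-adic analytic families of characters: writing $\chic(g)^{s-1} = \exp_p((s-1)\log_p \chic(g))$ when $\chic(g)$ is a principal unit (which holds after restricting to the appropriate finite-index subgroup, or on the wild inertia / principal-unit part), the derivative in $s$ at an arithmetic point is $\log_p\chic(g)\cdot \chic^{s-1}(g)$, and evaluation at $g = \Frob_p$ or $g = \rec(u)$ is simply substitution. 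Once this bookkeeping is in place, both formulas drop out from the two evaluations $\chic(\Frob_p) = 1$ and $\chic(\rec(u)) = u^{-1}$ together with the normalization $\log_p(p) = 0$ (which is what guarantees $\log_p\chic(\Frob_p) = \log_p(1) = 0$ consistently, even though here it already follows from $\chic(\Frob_p)=1$). No deeper input is needed.
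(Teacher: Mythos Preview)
Your argument is correct and follows the same approach as the paper: part~(a) from $\chic(\Frob_p)=1$ (so $\theta(s)(\Frob_p)\equiv1$), and part~(b) from $\chic(\rec(u))=u^{-1}$, giving $\theta(s)(\rec(u))=u^{1-s}$ and hence logarithmic derivative $-\log_p u$. The opening sentence of your first paragraph is phrased confusingly (you write ``$\chic(\Frob_p)$ is not $1$'' before correcting yourself), but the mathematics is right and matches the paper's proof line for line.
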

	\begin{proof}
		The first equality is simply because $\chic(\Frob_p)=1$. For the second, recall that\linebreak	
		$\chic(\rec(u))=u^{-1}$, so $\theta(s)(\rec(u))=u^{1-s}$. Thus, the logarithmic derivative of $\theta(s)(\rec(u))$ is indeed $-\log_pu$.
	\end{proof}
	\begin{proof}[Proof of corollary \ref{prop:c6coords}]
		The first coordinate is obtained by taking an arbitrary principal unit $u$, evaluating $\ol{c}_{6,\Delta}$ and $\rec(u)$ and dividing by $-\log_pu$. By equations \eqref{eqn:theta1} and \eqref{eqn:theta2}, $\partial_i\theta_i=0$. Combining the fact that the $\mu_i$ are unramified with part (b) of the above lemma yields
		\[ \frac{\ol{c}_{6,\Delta}(\rec(u))}{-\log_pu}=\frac{(1-\Delta)(-\log_pu)+\Delta(-2\log_pu)}{-\log_pu}=1-3\Delta.
		\]
		If $\Delta\neq1/3$, the first coordinate is non-zero, so $\ol{c}_{6,\Delta}$ itself is non-zero, so $c_{6,\Delta}$ satisfies (\hyperref[item:CL3]{CL3}).

		Combining part (a) of the above lemma with equations \eqref{eqn:mu1} and \eqref{eqn:mu2} yields the second coordinate (recall that $\delta(\Frob_p)=\alpha_p$).
	\end{proof}
	\begin{remark}
		If we take $\Delta=1/3$, the first coordinate of $\ol{c}_{6,1/3}$ vanishes. Hence,\linebreak	
		$\ol{c}_{6,1/3}~\in~\hone[\nr]{\Qp}{W}$. Therefore, $[c_{6,1/3}]\in\bSel_\QQ(V)=0$ (by assumption (\hyperref[item:Z]{Z})). The direction $\Delta=1/3$ is the one for which $\epsilon_1/\epsilon_2=2$, i.e.\ the direction corresponding to the symmetric cube of the $\gl(2)$ Hida deformation of $\rho_f$. This is an instance of the behaviour mentioned at the end of section \ref{sec:sympowgeneral}.
	\end{remark}

\subsection{Formula for the \texorpdfstring{$L$}{\textit{L}}-invariant}
	Tying all this together yields the main theorem of this article.
	\begin{theoremA}\label{thm:theoremA}
		Let $p\geq3$ be a prime. Let $f$ a $p$-ordinary, holomorphic, non-CM, cuspidal, normalized newform of even weight $k\geq4$, level $\Gamma_1(N)$ (prime to $p$), and trivial character. Let $\rho$ be a critical, exceptional Tate twist of $\Sym^6\!\rho_f$, i.e.\ $\rho=\rho_6=(\Sym^6\!\rho_f)(3(1-k))$ or its Tate dual. Assume conditions \textup{(\hyperref[item:et]{\'Et})}, \textup{(\hyperref[item:RAI]{RAI})}, and \textup{(\hyperref[item:Z]{Z})}. Then,
		\begin{equation}\label{eqn:sym6Linvar}
			\mathcal{L}(\rho)=-\alpha_p^3\alpha_p^{(1,1)}+3\alpha_p\alpha_p^{(2,1)}.
		\end{equation}
	\end{theoremA}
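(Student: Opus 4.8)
The plan is to combine the formula for $\mathcal{L}(V)$ in \eqref{eqn:linvarslop} with the explicit coordinates computed in Corollary \ref{prop:c6coords}. First I would observe that by Lemma \ref{lem:exceptwists} the representation $\rho_6$ satisfies assumption (\hyperref[item:Tp]{T$^\prime$}), that assumption (\hyperref[item:S]{S}) is automatic for symmetric powers of $\rho_f$, and that (\hyperref[item:U]{U}) holds by the Hasse bound, as explained in Section \ref{sec:sympowgeneral}; together with (\hyperref[item:C]{C}) (criticality, which is part of the hypothesis) and the assumed condition (\hyperref[item:Z]{Z}), all the hypotheses needed to define Greenberg's $L$-invariant as the slope of $\Hlx$ are in place. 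Since $\mathcal{L}(\rho_6)=\mathcal{L}(\rho_6^\ast)$ by Greenberg's convention for the Tate dual (recalled after Lemma \ref{lem:exceptwists}), it suffices to treat $\rho=\rho_6$.

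Next I would invoke the construction of Sections \ref{sec:calcLinvar} and the properties already established: the cocycle $c_{6,\Delta}$ obtained from the Hida deformation $\wt{\rho}_3$ on $\GSp(4)_{/\QQ}$ satisfies (\hyperref[item:CL1]{CL1}) by the arguments of \cite[Lemmas 1.2, 1.3]{Hi07}, it satisfies (\hyperref[item:CL2]{CL2}) because the bottom three coordinates of \eqref{eqn:c6formula} vanish, and it satisfies (\hyperref[item:CL3]{CL3}) whenever $\Delta\neq1/3$ by Corollary \ref{prop:c6coords}. Fixing any such $\Delta$ — for concreteness $\Delta=0$ — the class $[c_{6,0}]$ therefore generates $\Hgx$, so its image $\ol{c}_{6,0}$ generates $\Hlx$, and $\mathcal{L}(\rho_6)$ is the slope of this line in the coordinates of \eqref{eqn:coords}. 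By Corollary \ref{prop:c6coords} with $\Delta=0$ those coordinates are $\left(1,\,-\alpha_p^3\alpha_p^{(1,1)}+3\alpha_p\alpha_p^{(2,1)}\right)$, and plugging into \eqref{eqn:linvarslop} gives
\[
\mathcal{L}(\rho_6)=\frac{-\alpha_p^3\alpha_p^{(1,1)}+3\alpha_p\alpha_p^{(2,1)}}{1}=-\alpha_p^3\alpha_p^{(1,1)}+3\alpha_p\alpha_p^{(2,1)},
\]
which is \eqref{eqn:sym6Linvar}. As a consistency check (and to confirm the answer does not depend on $\Delta$), I would verify that for general $\Delta\neq1/3$ the ratio of the two coordinates in Corollary \ref{prop:c6coords} is independent of $\Delta$: this is exactly the statement that the ``difference'' direction $\partial_2-2\partial_1$ contributes nothing, which follows from the chain-rule relations \eqref{eqn:mu1relations} and \eqref{eqn:mu2relations} together with the fact, established in the remark following Corollary \ref{prop:c6coords}, that $[c_{6,1/3}]=0$. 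Indeed $[c_{6,1/3}]=0$ forces the $\Delta=1/3$ specialization of the second coordinate to vanish as well (its first coordinate already vanishes), which pins down the numerator's $\Delta$-dependence to be proportional to $1-3\Delta$, matching the first coordinate.

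The only genuinely substantive input is everything that precedes this assembly: the existence and shape \eqref{eqn:rho3t} of the deformation $\wt{\rho}_3$ coming from Tilouine--Urban's Hida theory on $\GSp(4)_{/\QQ}$ and the Ramakrishnan--Shahidi lift, and the change-of-basis computation via raising/lowering operators yielding \eqref{eqn:c6formula}. Given those, the proof of the theorem itself is the short bookkeeping above; the main (already-resolved) obstacle was ensuring the projected cocycle lands in $F^{00}\rho_6$ and is nonzero modulo $F^1$, i.e.\ verifying (\hyperref[item:CL2]{CL2}) and (\hyperref[item:CL3]{CL3}), which is what the explicit formula \eqref{eqn:c6formula} accomplishes.
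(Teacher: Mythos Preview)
Your proposal is correct and follows essentially the same approach as the paper: pick $\Delta\neq1/3$, verify (CL1--3), and read off $\mathcal{L}(\rho_6)$ from Corollary~\ref{prop:c6coords} via \eqref{eqn:linvarslop}; your explicit choice $\Delta=0$ makes the first coordinate equal to $1$ and yields \eqref{eqn:sym6Linvar} immediately, whereas the paper phrases the same step as solving the linear system coming from general $\Delta$ together with \eqref{eqn:mu1relations}--\eqref{eqn:mu2relations}. (One small slip in your consistency-check paragraph: the $\Delta=1/3$ direction is $2\partial_1+\partial_2$---the $\Sym^3$-of-$\gl(2)$ direction along $s_1=2s_2$---not $\partial_2-2\partial_1$; this does not affect your main argument.)
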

	\begin{proof}
		Pick any $\Delta\neq1/3$. We've shown that $[c_{6,\Delta}]$ satisfies (\hyperref[item:CL1]{CL1--3}) and hence generates $\Hgx$. The coordinates of its image in $\hone{\Qp}{W}$ were obtained in corollary \ref{prop:c6coords}. Therefore, $\mathcal{L}(\rho_6)$ can be computed from equation \eqref{eqn:linvarslop}. Specifically, the result is obtained by solving the system of linear equations in $\mathcal{L}(\rho_6)$ and the $\alpha_p^{(i,j)}$ given by the coordinates of $\ol{c}_{6,\Delta}$ and equations \eqref{eqn:mu1relations} and \eqref{eqn:mu2relations}. The $L$-invariant of $\rho_6^\ast$ is by definition that of $\rho_6$.
	\end{proof}
	\begin{remark}
		We could express this result in terms of other $\alpha_p^{(i,j)}$. For example, picking $\Delta=1$ yields
		\[ \mathcal{L}(\rho_6)=\frac{1}{2}\alpha_p^3\alpha_p^{(1,2)}-\frac{3}{2}\alpha_p\alpha_p^{(2,2)}.
		\]
	\end{remark}

\subsection{Relation to Greenberg's \texorpdfstring{$L$}{\textit{L}}-invariant of the symmetric square}
	We can carry out the above analysis for the projection to $\rho_2:=(\Sym^2\!\rho_f)(1-k)$ in lemma \ref{lem:enddecomp} and compare the value of $\mathcal{L}(\rho_2)$ obtained with the known value (\cite[Theorem 1.1]{Hi04}, \cite[Theorem~A]{H-PhD})
	\[ \mathcal{L}(\rho_2)=-2\frac{\alpha_p^\prime}{\alpha_p}
	\]
	where $\alpha_p^\prime=\mu^\prime(k)(\Frob_p)=\alpha_p^\prime(k)$, and one assumes that $\bSel_\QQ(\rho_2)=0$.\footnote{This vanishing is known in many cases due to work of Hida (\cite{Hi04}), Kisin (\cite{Ki04}), and Weston (\cite{Wes04}). See those papers for details or \cite[Theorem 2.1.1]{H-PhD} for a summary.} The restriction of the cocycle $c_{2,\Delta}$ (in an appropriate basis) is
	\[ c_{2,\Delta}|_{G_p}\sim\left(\begin{array}{c}
							\ast\\
							(1-\Delta)\left(-\frac{2\partial_1\theta_2}{\chic^{2(k-1)}}-\frac{\partial_1\theta_1}{\chic^{k-1}}-3\delta^3\partial_1\mu_1-\delta\partial_1\mu_2\right) \\
							\hspace{0.28in}+\Delta\left(-\frac{2\partial_2\theta_2}{\chic^{2(k-1)}}-\frac{\partial_2\theta_1}{\chic^{k-1}}-3\delta^3\partial_2\mu_1-\delta\partial_2\mu_2\right)\\
							0
					\end{array}\right)
	\]
	Accordingly, the coordinates of the class $\ol{c}_{2,\Delta}$ are
	\[ \left(\Delta-2,(1-\Delta)\left(-3\alpha_p^3\alpha_p^{(1,1)}-\alpha_p\alpha_p^{(2,1)}\right)+\Delta\left(-3\alpha_p^3\alpha_p^{(1,2)}-\alpha_p\alpha_p^{(2,2)}\right)\right).
	\]
	The cocycle $c_{2,\Delta}$ can be used to compute $\mathcal{L}(\rho_2)$ when $\Delta\neq2$. When $\Delta=2$, one has, as above, $[c_{2,\Delta}]\in\bSel_\QQ(\rho_2)$. Taking $\Delta=0$ yields
	\begin{equation}\label{eqn:sym2Linvar}
		\mathcal{L}(\rho_2)=\frac{3}{2}\alpha_p^3\alpha_p^{(1,1)}+\frac{1}{2}\alpha_p\alpha_p^{(2,1)}.
	\end{equation}
	Combining equations \eqref{eqn:sym6Linvar} and \eqref{eqn:sym2Linvar} yields the following relation between $L$-invariants.
	\begin{theoremA}\label{thm:theoremB}
		Assuming \textup{(\hyperref[item:et]{\'Et})}, \textup{(\hyperref[item:RAI]{RAI})}, \textup{(\hyperref[item:Z]{Z})}, and $\bSel_\QQ(\rho_2)=0$, we have
		\[ \mathcal{L}(\rho_6)=-10\alpha_p^3\alpha_p^{(1,1)}+6\mathcal{L}(\rho_2).
		\]
	\end{theoremA}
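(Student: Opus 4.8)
The plan is to obtain Theorem~\ref{thm:theoremB} as a purely algebraic consequence of the two formulas already established, namely equation~\eqref{eqn:sym6Linvar} for $\mathcal{L}(\rho_6)$ and equation~\eqref{eqn:sym2Linvar} for $\mathcal{L}(\rho_2)$, combined with the chain-rule relations~\eqref{eqn:mu1relations} and~\eqref{eqn:mu2relations} coming from the fact that $\Sym^3\!\wt{\rho}_f$ sits inside $\wt{\rho}_3$ as the subfamily $s_1=2s_2$. There is no new Galois cohomology to construct here; everything reduces to linear algebra in the four quantities $\alpha_p^{(i,j)} = \partial_j\mu_i(a,b)(\Frob_p)$ for $i,j\in\{1,2\}$, together with the single quantity $\alpha_p' = \mu'(k)(\Frob_p)$.

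First I would evaluate the relations~\eqref{eqn:mu1relations} and~\eqref{eqn:mu2relations} at $\Frob_p$. Since $\delta(\Frob_p)=\alpha_p$ and $\mu'(k)(\Frob_p)=\alpha_p'$, these become
\[
2\alpha_p^{(1,1)} + \alpha_p^{(1,2)} = -\frac{3\alpha_p'}{\alpha_p^4}, \qquad
2\alpha_p^{(2,1)} + \alpha_p^{(2,2)} = -\frac{\alpha_p'}{\alpha_p^2}.
\]
Next I would use the freedom in the parameter $\Delta$: the displayed coordinates of $\ol{c}_{6,\Delta}$ in Corollary~\ref{prop:c6coords} and of $\ol{c}_{2,\Delta}$ give, via~\eqref{eqn:linvarslop}, the identities
\[
(1-3\Delta)\,\mathcal{L}(\rho_6) = (1-\Delta)\bigl(-\alpha_p^3\alpha_p^{(1,1)}+3\alpha_p\alpha_p^{(2,1)}\bigr) + \Delta\bigl(-\alpha_p^3\alpha_p^{(1,2)}+3\alpha_p\alpha_p^{(2,2)}\bigr),
\]
\[
(\Delta-2)\,\mathcal{L}(\rho_2) = (1-\Delta)\bigl(-3\alpha_p^3\alpha_p^{(1,1)}-\alpha_p\alpha_p^{(2,1)}\bigr) + \Delta\bigl(-3\alpha_p^3\alpha_p^{(1,2)}-\alpha_p\alpha_p^{(2,2)}\bigr),
\]
valid for all admissible $\Delta$. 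Comparing the coefficients of $\Delta$ and the constant terms (equivalently, taking $\Delta=0$ and $\Delta=1$, as is already done in the text to get~\eqref{eqn:sym6Linvar} and~\eqref{eqn:sym2Linvar}) one extracts $\mathcal{L}(\rho_6) = -\alpha_p^3\alpha_p^{(1,1)} + 3\alpha_p\alpha_p^{(2,1)}$ and $\mathcal{L}(\rho_2) = \tfrac{3}{2}\alpha_p^3\alpha_p^{(1,1)} + \tfrac{1}{2}\alpha_p\alpha_p^{(2,1)}$, so that $\alpha_p\alpha_p^{(2,1)}$ can be eliminated: from the $\mathcal{L}(\rho_2)$ expression, $3\alpha_p\alpha_p^{(2,1)} = 6\mathcal{L}(\rho_2) - 9\alpha_p^3\alpha_p^{(1,1)}$, whence $\mathcal{L}(\rho_6) = -\alpha_p^3\alpha_p^{(1,1)} + 6\mathcal{L}(\rho_2) - 9\alpha_p^3\alpha_p^{(1,1)} = -10\alpha_p^3\alpha_p^{(1,1)} + 6\mathcal{L}(\rho_2)$, which is exactly the claimed relation.

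I do not expect a serious obstacle: the only subtlety is bookkeeping of the normalizations (the contragredient convention, the Tate-twist shift $m(1-k)$ with $m=3$, and the exponent $\delta^3$ versus $\delta$ in the various entries), and making sure the expressions for $\mathcal{L}(\rho_6)$ and $\mathcal{L}(\rho_2)$ are being read off consistently — in particular that one uses the \emph{same} basis-dependent identification of $\alpha_p^{(1,1)}$ and $\alpha_p^{(2,1)}$ in both, which is guaranteed because both cocycles $c_{6,\Delta}$ and $c_{2,\Delta}$ are projections of the \emph{single} deformation $\wt{\rho}_3$ in the decomposition of Lemma~\ref{lem:enddecomp}. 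One should also note that the hypothesis $\bSel_\QQ(\rho_2)=0$ is precisely what is needed to invoke the formula $\mathcal{L}(\rho_2) = \tfrac32\alpha_p^3\alpha_p^{(1,1)} + \tfrac12\alpha_p\alpha_p^{(2,1)}$ (it guarantees $[c_{2,\Delta}]$ with $\Delta\neq2$ generates $\Hgx$ for $\rho_2$, exactly as in the proof of Theorem~\ref{thm:theoremA}), while (\hyperref[item:et]{\'Et}), (\hyperref[item:RAI]{RAI}), (\hyperref[item:Z]{Z}) are carried over from that theorem. The proof is then just the substitution above.
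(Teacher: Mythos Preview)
Your proposal is correct and follows essentially the same route as the paper: the theorem is obtained by algebraically eliminating $\alpha_p\alpha_p^{(2,1)}$ between the formulas \eqref{eqn:sym6Linvar} and \eqref{eqn:sym2Linvar}, exactly as you do. The chain-rule relations \eqref{eqn:mu1relations}--\eqref{eqn:mu2relations} you record are not actually needed for this final step (they were already absorbed into the derivation of \eqref{eqn:sym6Linvar} in Theorem~\ref{thm:theoremA}), but including them does no harm, and your discussion of why each hypothesis is invoked is accurate.
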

	\begin{remark}
		There is a guess, suggested by Greenberg \cite[p.~170]{G94}, that the $L$-invariants of all symmetric powers of $\rho_f$ should be equal. This is known in the cases where it is relatively easy to compute the $L$-invariant, namely when $f$ corresponds to an elliptic curve with split, multiplicative reduction at $p$, or when $f$ has CM. In the case at hand, we fall one relation short of showing the equality of $\mathcal{L}(\rho_6)$ and $\mathcal{L}(\rho_2)$. Equality would occur if one knew the relation
		\[ \alpha_p^{(1,1)}\overset{?}{=}-\frac{\alpha_p^\prime}{\alpha_p^4}.
		\]
	\end{remark}
\nocite{xxx}
\nocite{yyy}
\addcontentsline{toc}{section}{References}
\bibliographystyle{modamsalpha}
\bibliography{T_Master}

\end{document}